\theoremstyle{plain}
\newtheorem{thm}{Theorem}[section]
\newtheorem{lemma}[thm]{Lemma}
\newtheorem{rem}[thm]{Remark}
\theoremstyle{definition}
\newtheorem{definition}[thm]{Definition}
\numberwithin{equation}{section}
\begin{document}
\title[Matrix KSGNS construction and a Radon--Nikodym type theorem]{Matrix KSGNS construction and a Radon--Nikodym type theorem }

\author{Mohammad Sal Moslehian}
\address{Department of Pure Mathematics, Center Of Excellence in Analysis on Algebraic Structures (CEAAS), Ferdowsi University of Mashhad, P. O. Box 1159, Mashhad 91775, Iran}
\email{moslehian@um.ac.ir and moslehian@member.ams.org}

\author{Anatoly Kusraev}
\address{Southern Mathematical Institute of the Russian Academy of Sciences\\
str. Markusa 22,
Vladikavkaz, 362027 Russia}
\email{kusraev@smath.ru}

\author{Marat Pliev}
\address{Southern Mathematical Institute of the Russian Academy of Sciences\\
str. Markusa 22,
Vladikavkaz, 362027 Russia;
RUDN University\\
6 Miklukho-Maklaya st, Moscow, 117198, Russia}
\email{maratpliev@gmail.com}

\keywords{Locally $C^*$-algebra; Hilbert $A$-module; Stinespring construction; completely $n$-positive map; commutant.}

\subjclass[2010]{Primary 46L08; Secondary 46L05.}

\begin{abstract}
In this paper, we introduce the concept of completely positive matrix of linear maps on Hilbert $A$-modules
over locally $C^{*}$-algebras and prove an analogue of Stinespring theorem for it. We show that any two
minimal Stinespring representations for such matrices are unitarily equivalent. Finally, we prove an analogue of the Radon--Nikodym theorem for this type of completely positive $n\times n$ matrices.
\end{abstract}

\maketitle

\section{Introduction and preliminaries}

The study of completely positive linear maps is motivated by applications of the theory of completely positive linear maps to quantum
information theory, where operator valued completely positive linear maps on topological algebras with an involution are used as a mathematical model for quantum operations and quantum probability. In the article we deal with rather a wide class of algebras, which is called the class of locally $C^{*}$-algebras.
This class was first introduced by Inoue \cite{In}. Later Phillips \cite{Ph} showed that locally $C^{*}$-algebras can be considered as inverse limits of $C^{*}$-algebras.
In recent years, Joi\c{t}a has provided a deep theory of Hilbert $\mathscr{A}$-modules over locally $C^{*}$-algebras; see \cite{J-00,J-0}.
For basic information about locally $C^{*}$-algebras and Hilbert modules over them the reader is referred to \cite{F,J-00}.

A linear map $\varphi:\mathscr{A}\to \mathscr{B}$ between locally $C^*$-algebras is said to be {\it positive}, if $\varphi(a^*a)\geq 0$ for all $a\in \mathscr{A}$. A {\it completely positive map} $\varphi:\mathscr{A}\to \mathscr{B}$ of locally $C^*$-algebras is a linear map such that $\varphi_n: \mathbb{M}_{n}(\mathscr{A})\to \mathbb{M}_{n}(\mathscr{B})$ defined by $\varphi_n\left((a_{ij}))_{i,j=1}^{n}\right)=\left(\varphi(a_{ij})\right)_{i,j=1}^{n}$ is positive. Stinespring \cite{St} showed that a completely positive linear map $\varphi$ from $\mathscr{A}$ to the $C^*$-algebra $\mathscr{L}(\mathcal{H})$ of all bounded linear operators acting on a Hilbert space $\mathcal{H}$ is of the form $\varphi(\cdot)=S^*\pi(\cdot)S$, where $\pi$ is a $*$-representation of $\mathscr{A}$ on a Hilbert space $\mathcal{K}$ and $S$ is a bounded linear operator from $\mathcal{H}$ to $\mathcal{K}$. Nowadays, the theory of completely positive linear maps on Hilbert and Krein $A$-modules is a vast area of the modern analysis (see \cite{B, HH, JJM, J-1, J-2, MP, PT, S}).

Throughout the paper, we identify any $n\times n$ matrix $(\varphi_{ij})_{i,j=1}^{n}$, whose entries are linear maps from $\mathscr{A}$ to $\mathscr{B}$, with the  linear map $[\varphi]:\mathbb{M}_{n}(\mathscr{A})\to \mathbb{M}_{n}(\mathscr{B})$
defined by
$$
[\varphi]((a_{ij})_{i,j=1}^{n})=(\varphi_{ij}(a_{ij}))_{i,j=1}^{n}\,.
$$
If $[\varphi]$ is a completely positive linear map from $\mathbb{M}_{n}(\mathscr{A})$ to $\mathbb{M}_{n}(\mathscr{B})$, then we say that $[\varphi]$ is a {\it completely $n$-positive linear map} from $\mathscr{A}$ to $\mathscr{B}$. In this case, the map $\varphi_{ii}$ is clearly a completely positive linear map from $\mathscr{A}$ to $\mathscr{B}$ for each $1\leq i\leq n$; see \cite{SUE}.
In this note, we consider a matrix version of KSGNS construction and prove a Radon--Nikodym type theorem for completely positive $n\times n$ matrices of linear maps on Hilbert $\mathscr{A}$-modules.

A Hilbert $\mathscr{A}$-module $\mathcal{M}$ over a locally $C^*$-algebra $\mathscr{A}$ is a right $\mathscr{A}$-module, equipped with an $\mathscr{A}$-valued inner product $\langle\cdot,\cdot\rangle$ that is
$\Bbb{C}$-linear and $\mathscr{A}$-linear in the second variable and $\langle y,x\rangle=\langle x,y\rangle^*\,\,(x,y \in \mathcal{M})$ such that $\mathscr{A}$ is complete under the topology generated by the family of seminorms $\|x\|_{\alpha}=\|\langle x,x\rangle\|_{\alpha}^{\frac{1}{2}}$, $\alpha\in\Delta$. If the closed two-sided ideal
$\langle \mathcal{M},\mathcal{M}\rangle$ of $\mathscr{A}$ generated by $\{\langle x,y\rangle:\,x,y\in \mathcal{M}\}$ coincides with $\mathscr{A}$, we say that
$\mathcal{M}$ is full.

We need some auxiliary information about locally Hilbert spaces. The reader can find some detailed description in \cite[Chap.~2]{F}.

\begin{definition}
Let $\Delta$ be a upward directed set, $\mathcal{H}_{\alpha}$ be a Hilbert space for every $\alpha\in\Delta$, $\mathcal{H}_{\alpha}$ be a
closed subspace of $\mathcal{H}_{\beta}$, $\alpha\leq\beta$ and $\mathcal{H}=\lim\limits_{\longrightarrow}\mathcal{H}_{\alpha}=\bigcup_{\alpha\in\Delta}\mathcal{H}_{\alpha}$. Endow the vector space $\mathcal{H}$ with an inductive limit topology, that is the finest locally convex topology making the maps $i_{\alpha}:\mathcal{H}_{\alpha}\to \mathcal{H}$ continuous. Then the topological vector space $\mathcal{H}$ is called a {\it locally Hilbert space}.
\end{definition}
By $i_{\alpha}$ and $i_{\alpha\beta}$ we denote the continuous embeddings $i_{\alpha}:\mathcal{H}_{\alpha}\to \mathcal{H}$
and $i_{\alpha,\beta}:\mathcal{H}_{\alpha}\to \mathcal{H}_{\beta}$, respectively. Elements of $\mathcal{H}$ can be considered as $(\xi_{\alpha})_{\alpha\in\Delta}$, where $\xi_{\alpha}\in \mathcal{H}_{\alpha}$ and  $\xi_{\beta}=i_{\alpha\beta}\xi_{\alpha}$ for all $\alpha,\beta\in\Delta$, $\alpha\leq\beta$.
Let $\mathcal{H}$ and $\mathcal{K}$ be locally Hilbert spaces with the same index set $\Delta$ and let $T$ be a linear operator
from $\mathcal{H}$ to $\mathcal{K}$. Since $\mathcal{H}_{\alpha}$ ($\mathcal{K}_{\alpha}$) is a closed subspace of $\mathcal{H}_{\beta}$ ($\mathcal{K}_{\beta}$) we can define the projection $P_{\alpha\beta}:\mathcal{H}_{\beta}\to\mathcal{H}_{\alpha}$
($R_{\alpha\beta}:\mathcal{K}_{\beta}\to\mathcal{K}_{\alpha}$). Consider linear operator $T:\mathcal{H}\to\mathcal{K}$ such that
$R_{\alpha\beta}T=P_{\alpha\beta}T$ for every $\alpha\leq\beta$.
Put $T_{\alpha}:=T|_{\mathcal{H}_{\alpha}}$.
Recall that for two Hilbert spaces $\mathcal{H}_{\alpha}$ and $\mathcal{K}_{\alpha}$, the Banach space of all bounded linear
operators from $\mathcal{H}_{\alpha}$ to $\mathcal{K}_{\alpha}$ is denoted by $\mathscr{L}(\mathcal{H}_{\alpha},\mathcal{K}_{\alpha})$.
Put
$$
\mathscr{L}(\mathcal{H},\mathcal{K}):=\{T:\mathcal{H}\to\mathcal{K}:\,T=\lim\limits_{\longrightarrow}T_{\alpha}:T_{\alpha}\in \mathscr{L}(\mathcal{H}_{\alpha},\mathcal{K}_{\alpha})\}
$$
Let $T=\lim\limits_{\longrightarrow}T_{\alpha}\in\mathscr{L}(\mathcal{H},\mathcal{K})$. Consider the adjoint $T^{*}_{\alpha}$ of
$T_{\alpha}\in\mathscr{L}(\mathcal{H}_{\alpha},\mathcal{K}_{\alpha})$, $\alpha\in\Delta$ and let $\alpha\leq\beta$ in $\Delta$,
$x\in \mathcal{H}_{\alpha}$, $y\in \mathcal{H}_{\alpha}$. Then
\begin{gather*}
\langle T^{*}_{\beta}x,y\rangle_{\beta}=\langle x,T_{\beta}y\rangle_{\beta}=\langle x,T_{\alpha}y\rangle_{\alpha}=
\langle T^{*}_{\alpha}x,y\rangle_{\alpha}.
\end{gather*}
Thus we have $T^{*}_{\beta}|_{\mathcal{K}_{\alpha}}=T^{*}_{\alpha}$ for every $\alpha,\beta\in\Delta$, $\alpha\leq\beta$.
Then there exists a unique element $T^{*}\in\mathscr{L}(\mathcal{K},\mathcal{H})$ with
$$
T^{*}=\lim\limits_{\longrightarrow}T^{*}_{\alpha}\,\,\,\text{such that}\,\,\,
T^{*}|_{\mathcal{K}_{\alpha}}=T^{*}_{\alpha},\,\alpha\in\Delta.
$$
The operator $T^{*}$ is called the {\it adjoint} of $T$. It is worth to note that $\mathscr{L}(\mathcal{H},\mathcal{K})$
has a structure of a Hilbert $\mathscr{L}(\mathcal{H})$-module over $\mathscr{L}(\mathcal{H})$ with the usual composition as the right module action and the inner product given by $\langle T_{1},T_{2}\rangle=T_{1}^*T_{2}$ for $T_{1},T_{2}\in \mathscr{L}(\mathcal{H},\mathcal{K})$. The topology on $\mathscr{L}(\mathcal{H},\mathcal{K})$ is generated by the family of seminorms $\|\cdot\|_{\alpha}$, $\alpha\in\Delta$, where
$$
\|T\|_{\alpha}:=\|T^{*}_{\alpha}T_{\alpha}\|_{\alpha},\,\,\alpha\in\Delta.
$$
\begin{definition}
A representation of Hilbert $\mathscr{A}$-module $\mathcal{M}$ on locally Hilbert spaces $\mathcal{H}$ and $\mathcal{K}$ is a continuous linear map $\Pi:\mathcal{M}\to \mathscr{L}(\mathcal{H},\mathcal{K})$
with the property that there is a continuous $*$-representation $\pi$ of $\mathscr{A}$ on the locally Hilbert space $\mathcal{H}$ such that
$$
\langle \Pi(x),\Pi(y)\rangle=\pi(\langle x,y\rangle)
$$
for all $x,y\in\mathcal{M}$. A representation $\Pi:\mathcal{M}\to \mathscr{L}(\mathcal{H},\mathcal{K})$ of $\mathcal{M}$ is nondegenerate if $[\Pi(\mathcal{M})(\mathcal{H})]=\mathcal{K}$
and $[\Pi(\mathcal{M})^*(\mathcal{K})]=\mathcal{H}$ (throughout the paper, $[Y]$ denotes the closed subspace of a topological vector space $Z$ generated by a subset $Y$ of $Z$).
\end{definition}

If $\mathcal{M}$ is full, then the $*$-representation $\pi$ associated to $\Pi$ is evidently unique.

A linear map $\Phi:\mathcal{M}\to \mathscr{L}(\mathcal{H},\mathcal{K})$ is called \textit{completely positive} on $\mathcal{M}$ if there is a completely positive linear map
$\varphi:\mathscr{A}\to \mathscr{L}(\mathcal{H})$ such that
$$
\langle \Phi(x),\Phi(y)\rangle=\varphi(\langle x,y\rangle)
$$
for all $x,y\in\mathcal{M}$.

\section{The matrix KSGNS construction}

Let $\mathcal{M}$ be a Hilbert module over $\mathscr{A}$ and let $\mathcal{H},\mathcal{K}$ be locally Hilbert spaces.
In all considerations below, we assume that for a locally $C^{*}$-algebra $\mathscr{A}=\lim\limits_{\longleftarrow}\mathscr{A}_{\alpha}$ the set $S(\mathscr{A})$ of all continuous $C^{*}$ seminorms on $\mathscr{A}$ coincides
with the indexing set $\Delta$, where
$\mathcal{H}=\bigcup_{\alpha\in\Delta}\mathcal{H}_{\alpha}$.

Let $\Phi_{ij}:\mathcal{M}\to \mathscr{L}(\mathcal{H},\mathcal{K})\,\,i,j\in\{1,\dots,n\}$ be given continuous linear maps.

\begin{definition}\label{CP}
An $n\times n$ matrix $[\Phi]:=(\Phi_{ij})_{i,j=1}^{n}$ of continuous linear maps $\Phi_{ij}:\mathcal{M}\to \mathscr{L}(\mathcal{H},\mathcal{K})$ is called \textit{completely positive}, if there exists a continuous completely $n$-positive map $[\varphi]$ from $\mathscr{A}$ to $\mathscr{L}(\mathcal{H})$ such that
\begin{gather}
\langle[\Phi](x),[\Phi](y)\rangle:=(\Phi_{ij}(x)_{i,j=1}^{n})^{*}(\Phi_{ij}(y))_{i,j=1}^{n}=(\varphi_{ij}\langle x,y\rangle)_{i,j=1}^{n}
\end{gather}
for every $x,y\in \mathcal{M}$. Then we say that $[\Phi]$ is a $[\varphi]$-completely positive $n\times n$ matrix.\\
The equality above needs clarification.
By $(\Phi_{ij}(x)_{i,j=1}^{n})^{*}$ we mean $(\Phi_{ji}(x)^{*})_{i,j=1}^{n}$, where $\Phi_{ji}(x)^{*}$
is the adjoint of $\Phi_{ji}(x)$. Thus, in equality~\ref{CP}, we have
$$
\varphi_{ij}\langle x,y\rangle=\sum_{r=1}^{n}\Phi_{ri}(x)^{*}\Phi_{rj}(y).
$$
By $\mathcal{CP}_{n}(\mathcal{M},\mathscr{L}(\mathcal{H},\mathcal{K}))$ we denote the set of
all completely positive $n\times n$ matrices from $\mathcal{M}$ to $\mathscr{L}(\mathcal{H},\mathcal{K})$.
\end{definition}
\begin{thm}\label{KGNS}
Let $\mathscr{A}=\lim\limits_{\longleftarrow}\mathscr{A}_{\alpha}$ be a unital locally $C^*$-algebra and $\mathcal{M}$ be a Hilbert $\mathscr{A}$-module over $\mathscr{A}$. Let $[\varphi]:\mathscr{A}\to \mathscr{L}(\mathcal{H})$ be a continuous completely $n$-positive map corresponded to a matrix $(\varphi_{ij})_{i,j=1}^{n}$ of linear maps from $\mathscr{A}$ to $\mathscr{L}(\mathcal{H})$, and $[\Phi]=(\Phi_{ij})_{i,j=1}^{n}$, $\Phi_{ij}:\mathcal{M}\to \mathscr{L}(\mathcal{H},\mathcal{K}),\,i,j\in\{1,\dots,n\}$ be a $[\varphi]$-completely positive $n\times n$ matrix. Then there exists a data $(\pi^{\Phi},\mathcal{H}^{\Phi},\mathcal{K}^{\Phi},S_{1}^{\Phi},\dots, S_{n}^{\Phi},W_{1}^{\Phi},\dots, W_{n}^{\Phi})$, where
$\mathcal{H}^{\Phi},\mathcal{K}^{\Phi}$ are locally Hilbert spaces; $\pi^{\Phi}:\mathcal{M}\to \mathscr{L}(\mathcal{H}^{\Phi},\mathcal{K}^{\Phi})$ is a representation of the module $\mathcal{M}$ on the locally Hilbert spaces $\mathcal{H}^{\Phi}$ and $\mathcal{K}^{\Phi}$, which associated with the continuous $*$-homomorphism $\pi^{\varphi}:\mathscr{A}\to \mathscr{L}(\mathcal{H}^{\Phi})$, the maps $S_{i}^{\Phi}:\mathcal{H}\to \mathcal{H}^{\Phi}$ and $W_{i}^{\Phi}:\mathcal{K}\to \mathcal{K}^{\Phi}$ are continuous linear operators such that:
\begin{enumerate}
\item~$\varphi_{ij}(a)=(S_{i}^{\Phi})^*\pi^{\varphi}(a)S_{j}^{\Phi}$ for every $a\in \mathscr{A}$, $i,j\in\{1,\dots,n\}$;
\item~$\Phi_{ij}(x)=(W_{i}^{\Phi})^*\pi^{\Phi}(x)S_{j}^{\Phi}$ for every $x\in \mathcal{M}$, $i\in\{1,\dots,n\}$;
\item~$\mathcal{H}^{\Phi}=[\{\pi^{\varphi}(\mathscr{A})S_{i}^{\Phi}(\mathcal{H}):\,i,j=1,\dots,n\}]$;
\item~$\mathcal{K}^{\Phi}=[\{\pi^{\Phi}(\mathcal{M})S_{i}^{\Phi}(\mathcal{H}):\,i=1,\dots,n\}]$.
\end{enumerate}
\end{thm}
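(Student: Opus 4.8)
The plan is to mimic the classical KSGNS (Kasparov–Stinespring–Gelfand–Naimark–Segal) construction, but carried out on the algebraic tensor product $\mathscr{A}\otimes(\mathcal{H}\oplus\cdots\oplus\mathcal{H})$ ($n$ copies) for the algebra part, and on $\mathcal{M}\otimes(\mathcal{H}\oplus\cdots\oplus\mathcal{H})$ for the module part, using the matrix $[\varphi]$ and the matrix $[\Phi]$ respectively to define candidate semi-inner products. First I would fix $\alpha\in\Delta$ and work at the level of the Hilbert spaces $\mathcal{H}_\alpha$. On $\mathscr{A}\otimes\mathcal{H}^{(n)}$ (thinking of elements as column vectors $(a_1\otimes\xi_1,\dots,a_n\otimes\xi_n)$) define
\[
\Big\langle \sum_j a_j\otimes\xi_j,\ \sum_k b_k\otimes\eta_k\Big\rangle
=\sum_{j,k}\big\langle \varphi_{jk}(a_j^*b_k)\xi_k,\ \xi_j\big\rangle_{\mathcal{H}_\alpha};
\]
positivity of this form is exactly the statement that $[\varphi]$ is completely $n$-positive (apply $\varphi_n$ to the positive matrix $(a_i^*a_j)$ and pair with the vector $(\xi_1,\dots,\xi_n)$), so after quotienting by the null space $N_\varphi$ and completing we obtain a Hilbert space; taking the inductive limit over $\alpha$ (the compatibility $R_{\alpha\beta}=P_{\alpha\beta}$–type conditions pass through because each $\varphi_{ij}$ is continuous and respects the $\mathscr{A}_\alpha$-structure) yields the locally Hilbert space $\mathcal{H}^\Phi$. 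The left multiplication $a\cdot(b\otimes\xi)=ab\otimes\xi$ (diagonally on all $n$ coordinates) descends to a $*$-representation $\pi^\varphi:\mathscr{A}\to\mathscr{L}(\mathcal{H}^\Phi)$, and the maps $S_i^\Phi:\mathcal{H}\to\mathcal{H}^\Phi$ sending $\xi\mapsto$ (class of $1\otimes\xi$ in the $i$-th slot) are continuous linear with $(S_i^\Phi)^*\pi^\varphi(a)S_j^\Phi=\varphi_{ij}(a)$ by a direct computation; here we use that $\mathscr{A}$ is unital so that $1\otimes\xi$ makes sense and $\pi^\varphi$ is nondegenerate, giving (1).

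Next I would build $\mathcal{K}^\Phi$ the same way on $\mathcal{M}\otimes\mathcal{H}^{(n)}$, using the sesquilinear form
\[
\Big\langle \sum_j x_j\otimes\xi_j,\ \sum_k y_k\otimes\eta_k\Big\rangle
=\sum_{j,k}\big\langle \varphi_{jk}(\langle x_j,y_k\rangle)\,\eta_k,\ \xi_j\big\rangle;
\]
its positivity follows from the $[\varphi]$-complete positivity of $[\Phi]$ together with the identity $\varphi_{ij}(\langle x,y\rangle)=\sum_r\Phi_{ri}(x)^*\Phi_{rj}(y)$, since the right-hand side, summed against a vector, is a sum of squared norms. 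After quotient and completion and passage to the inductive limit we get $\mathcal{K}^\Phi$. Define $\pi^\Phi:\mathcal{M}\to\mathscr{L}(\mathcal{H}^\Phi,\mathcal{K}^\Phi)$ on elementary tensors by $\pi^\Phi(x)(a\otimes\xi)=(xa)\otimes\xi$ (diagonally); one checks $\langle\pi^\Phi(x),\pi^\Phi(y)\rangle=\pi^\varphi(\langle x,y\rangle)$ so that $\pi^\Phi$ is genuinely a representation of the module associated to $\pi^\varphi$. Let $W_i^\Phi:\mathcal{K}\to\mathcal{K}^\Phi$ be determined by the requirement $\langle W_i^\Phi\zeta,\ y\otimes\xi\rangle_{\mathcal{K}^\Phi}=\langle\zeta,\ \Phi_{ij}(y)\xi\rangle$ on generators — equivalently $(W_i^\Phi)^*(y\otimes\xi)=\sum_j$ (something in terms of $\Phi$); I would define $(W_i^\Phi)^*$ first, check it is bounded using the defining inner product on $\mathcal{K}^\Phi$, and let $W_i^\Phi$ be its adjoint. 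Property (2), $\Phi_{ij}(x)=(W_i^\Phi)^*\pi^\Phi(x)S_j^\Phi$, then follows by evaluating both sides against $\xi\in\mathcal{H}$ and $\zeta\in\mathcal{K}$ and using $\pi^\Phi(x)(1\otimes\xi)=x\otimes\xi$ together with the definition of $(W_i^\Phi)^*$.

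The minimality conditions (3) and (4) are built in: $\mathcal{H}^\Phi$ is by construction the closed span of classes of $a\otimes\xi$ placed in the various slots, i.e.\ of $\pi^\varphi(a)S_i^\Phi\xi$, and $\mathcal{K}^\Phi$ the closed span of $x\otimes\xi$, i.e.\ of $\pi^\Phi(x)S_i^\Phi\xi$; one only needs to observe that $\pi^\Phi(x)(a\otimes\xi)=(xa)\otimes\xi$ and that $\{xa:x\in\mathcal{M},a\in\mathscr{A}\}$ is dense in $\mathcal{M}$ (again using the unit, $x=x\cdot1$) to see the spans match what is claimed. The main obstacle I anticipate is not any single algebraic identity but the bookkeeping of the inductive-limit/projective-limit structure: one must verify that each operator built at level $\alpha$ (the representations, the $S_i$, the $W_i$, their adjoints) is compatible with the embeddings $i_{\alpha\beta}$ in the precise sense required for membership in $\mathscr{L}(\mathcal{H}^\Phi,\mathcal{K}^\Phi)$, i.e.\ that the relation $R_{\alpha\beta}T=P_{\alpha\beta}T$ and its analogues hold; this rests on the continuity of $[\varphi]$ and of the $\Phi_{ij}$ (so that null spaces and seminorms are respected under $\mathscr{A}_\alpha\to\mathscr{A}_\beta$) and is where the locally-$C^*$ setting genuinely differs from the $C^*$ case. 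A secondary check is that the form defining $\mathcal{K}^\Phi$ is well-defined independently of the representation of an element of $\mathcal{M}\otimes\mathcal{H}^{(n)}$ as a sum of elementary tensors, which is routine bilinearity.
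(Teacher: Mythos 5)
Your construction of $\mathcal{H}^{\Phi}$, $\pi^{\varphi}$ and the $S_i^{\Phi}$ is exactly the paper's: form the direct sum of $n$ copies of $\mathscr{A}_{\alpha}\otimes_{\mathrm{alg}}\mathcal{H}_{\alpha}$ with the semi-inner product built from $(\varphi_{ij})$, quotient by the null space, complete, and pass to the inductive limit (note only that positivity of that form for sums of several elementary tensors uses the full complete $n$-positivity of $[\varphi]$, not just positivity of a single matrix $(\varphi_{ij}(a_i^*a_j))$). Where you genuinely diverge is in the second half. The paper does not perform a second GNS-type quotient on $\mathcal{M}\otimes\mathcal{H}^{(n)}$: it realizes $\mathcal{K}^{\Phi}$ concretely as the tuple of closed subspaces $\mathcal{K}_i^{\Phi}=\big[\bigcup_{j}\Phi_{ij}(\mathcal{M})(\mathcal{H})\big]$ of $\mathcal{K}$, defines $\pi^{\Phi}(x)$ directly by sending $\sum_{s}\pi^{\varphi}(a_{js})S_j^{\Phi}\xi_{js}$ (in the $j$-th slots) to the tuple with $i$-th entry $\sum_{j,s}\Phi_{ij}(xa_{js})\xi_{js}$ (well-definedness checked by the same norm computation you sketch), and takes $W_i$ to be the orthogonal projection of $\mathcal{K}$ onto $\mathcal{K}_i^{\Phi}$, so that $W_i^{*}$ is an isometric inclusion and property (2) is immediate. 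Your abstract version is unitarily equivalent to this via the map sending $x\otimes\xi$ in slot $j$ to $(\Phi_{1j}(x)\xi,\dots,\Phi_{nj}(x)\xi)$, which is isometric precisely because $\varphi_{jk}(\langle x,y\rangle)=\sum_{r}\Phi_{rj}(x)^{*}\Phi_{rk}(y)$; so your route is sound, and it has the advantage of not having to guess in advance which subspace of $\mathcal{K}^{n}$ the representation occupies. What it costs you is that $W_i^{\Phi}$ is no longer a projection and must be produced by hand: your defining relation $\langle W_i^{\Phi}\zeta,\,y\otimes\xi\rangle=\langle\zeta,\Phi_{ij}(y)\xi\rangle$ has a dangling index $j$; the correct prescription is $(W_i^{\Phi})^{*}\big((y_j\otimes\xi_j)_{j}\big)=\sum_{j}\Phi_{ij}(y_j)\xi_j$, which is contractive (its squared norm is one of the $n$ nonnegative summands of the squared norm of $(y_j\otimes\xi_j)_j$ in your inner product), hence well defined on the quotient, and yields (2) on the nose. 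With that repair, and the inductive-limit compatibility checks you correctly identify as the locally-$C^{*}$-specific burden, the argument is complete; properties (3)--(4) are built in on both routes.
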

\begin{proof}
At first, we prove the existence of $\pi^{\varphi}$, $\mathcal{H}^{\Phi}$ and $S_{1}^{\Phi},\dots,S_{n}^{\Phi}$. We denote by $(\mathscr{A}_{\alpha}\otimes_{\text{alg}}\mathcal{H}_{\alpha})^{n}$ the direct sum of $n$ copies of the algebraic tensor
product $\mathscr{A}_{\alpha}\otimes_{\text{alg}} \mathcal{H}_{\alpha}$, $\alpha\in \Delta$. It is
not difficult to verify that $(\mathscr{A}_{\alpha}\otimes_{\text{alg}}\mathcal{H}_{\alpha})^{n}$ is a semi-linear product space under
$$
\big\langle\sum_{s=1}^{m}(a_{is}\otimes\xi_{is})_{i=1}^{n},\sum_{t=1}^{l}(b_{jt}\otimes\eta_{jt})_{j=1}^{n}\big\rangle_{\alpha0}=
\sum_{s,t=1}^{m,l}\sum_{i,j=1}^{n}\langle\xi_{is},\varphi_{ij}(a_{is}^{*}b_{jt})\eta_{jt}\rangle_{\alpha}\,.
$$
Put $M_{\alpha}:=\{\zeta\in(\mathscr{A}_{\alpha}\otimes_{\text{alg}}\mathcal{H}_{\alpha})^{n}):\langle\zeta,\zeta\rangle_{\alpha0}=0\}$. Employing the Cauchy--Schwarz inequality, we observe that $M_{\alpha}$
is a subspace of $(\mathscr{A}_{\alpha}\otimes_{\text{alg}}\mathcal{H}_{\alpha})^{n}$. Then
$(\mathscr{A}_{\alpha}\otimes_{\text{alg}}\mathcal{H}_{\alpha})^{n}/M_{\alpha}$ becomes a pre-Hilbert space with the inner product defined by
$$
\langle\zeta_{1}+M_{\alpha},\zeta_{2}+M_{\alpha}\rangle:=\langle\zeta_{1},\zeta_{2}\rangle_{\alpha0}
$$
for every $\alpha\in\Delta$.
The completion of $(\mathscr{A}_{\alpha}\otimes_{\text{alg}}\mathcal{H}_{\alpha})^{n}/M_{\alpha}$ with respect to the topology induced by this inner product is denoted by $\mathcal{H}_{\alpha}^{\Phi}$. Observe that if $M_{\alpha}\supset M_{\beta}$ and $\alpha,\beta\in\Delta$ and therefore
there is continuous embeddings  $j_{\alpha\beta}:\mathcal{H}_{\alpha}^{\Phi}\to\mathcal{H}_{\beta}^{\Phi}$. Put $\mathcal{H}^{\Phi}=\lim\limits_{\longrightarrow}\mathcal{H}_{\alpha}^{\Phi}$.
We denote by $\xi_{i}$ the element in $\lim\limits_{\longrightarrow}\big((\mathscr{A}_{\alpha}\otimes \mathcal{H}_{\alpha})^{n}/M_{\alpha}\big)$, whose $i^{\text{th}}$ component is $(1_{\alpha})_{\alpha\in\Delta}\otimes(\xi_{\alpha})_{\alpha\in\Delta}$ and all other components are $0$. Now we can define a linear map $S_{i}^{\Phi}:\mathcal{H}\rightarrow \mathcal{H}^{\Phi}$
$$
S_{i}(\xi)=\xi_{i}.
$$
Let us denote by $\xi_{a,i}$ the element in $\lim\limits_{\longrightarrow}\big((\mathscr{A}_{\alpha}\otimes_{\text{alg}}\mathcal{H}_{\alpha})^{n}/M_{\alpha}\big)$, whose $i^{\text{th}}$ component is $a\otimes\xi$ and all other components are $0$. Let $a\in \mathscr{A}$. Consider the linear map $\pi^{\varphi}(a):\lim\limits_{\longrightarrow}(\mathscr{A}_{\alpha}\otimes_{\text{alg}}\mathcal{H}_{\alpha})^{n}\rightarrow \lim\limits_{\longrightarrow}(\mathscr{A}_{\alpha}\otimes_{\text{alg}}\mathcal{H}_{\alpha})^{n}$ defined by
$$
\pi^{\varphi}(a)(a_{i}\otimes\xi_{i})_{i=1}^{n}=(aa_{i}\otimes\xi_{i})_{i=1}^{n}.
$$
The linear map $\pi^{\varphi}(a)$ can be extended by
linearity and continuity to a linear map, denoted also by $\pi^{\varphi}(a)$, from $\mathcal{H}^{\Phi}$ to $\mathcal{H}^{\Phi}$.
The fact that $\pi^{\varphi}(a)$ is a representation of $\mathscr{A}$ on $\mathscr{L}(\mathcal{H}^{\Phi})$ is showed in the same manner as in the proof of Theorem~3.3.2 of \cite{J-0}.
It is easy to check
that $\pi^{\varphi}(a_{i})S_{i}^{\Phi}\xi_{i}=\xi_{i,a}+\lim\limits_{\longrightarrow}M_{\alpha}$. Therefore the subspace of $\mathcal{H}^{\Phi}$ generated by $\pi^{\varphi}(a_{i})S_{i}^{\Phi}\xi_{i},\,i\in\{1,\dots,n\},\,\xi_{i}\in \mathcal{H}$, $a_{i}\in \mathscr{A}$ is exactly $\lim\limits_{\longrightarrow}\big((\mathscr{A}\otimes_{\text{alg}}\mathcal{H})^{n}/M_{\alpha}\big)$.

Let $\mathcal{K}^{\Phi}:=(\mathcal{K}_{1}^{\Phi},\dots,\mathcal{K}_{n}^{\Phi})$, where

$\mathcal{K}_{i}^{\Phi}:=\left[\bigcup_{j=1}^{n}\Phi_{ij}(\mathcal{M})S_{j}^{\Phi}(\mathcal{H})\right]$. Now we can define $\pi^{\Phi}:\mathcal{M}\rightarrow \mathscr{L}(\mathcal{H}^{\Phi},\mathcal{K}^{\Phi})$ as follows:
\begin{align*}
&\hspace{-1cm}\pi^{\Phi}(x)\Big(\sum_{s=1}^{m}\pi^{\varphi}(a_{1s})S_{1}^{\Phi}\xi_{1s},\dots,\sum_{s=1}^{m}\pi^{\varphi}(a_{ns})S_{n}^{\Phi}\xi_{ns}\Big)
:=\\
&=\Big(\sum_{j=1}^{n}\sum_{s=1}^{m}\Phi_{1j}(xa_{js})\xi_{js},\dots,\sum_{j=1}^{n}\sum_{s=1}^{m}\Phi_{nj}(xa_{js})\xi_{js}\Big)\,,
\end{align*}
where $x\in \mathcal{M}$, $a_{is}\in \mathscr{A}$, $\xi_{is}\in \mathcal{H}$, $1\leq s\leq m$, $m\in\Bbb{N}$.
We claim that $\pi^{\Phi}(x)$ is well defined. Indeed, we have
\begin{align*}
&\hspace{-1cm}\Big|\Big|\pi^{\Phi}(x)\Big(\sum_{s=1}^{m}\pi^{\varphi}(a_{1s})S_{1}^{\Phi}\xi_{1s},
\dots,\sum_{s=1}^{m}\pi^{\varphi}(a_{ns})S_{n}^{\Phi}\xi_{ns}\Big)\Big|\Big|_{\alpha}^{2}=\\
&\Big|\Big|\Big(\sum_{j=1}^{n}\sum_{s=1}^{m}\Phi_{1j}(xa_{js})\xi_{js},\dots,\sum_{j=1}^{n}\sum_{s=1}^{m}\Phi_{nj}(xa_{js})\xi_{js}\Big)\Big|\Big|_{\alpha}^{2}\\
&= \sum_{s,r=1}^{m}\sum_{i,j=1}^{n}\sum_{l=1}^{n}\langle\xi_{is},\Phi_{li}(xa_{is})^{*}\Phi_{lj}(xa_{jr})\xi_{jr}\rangle_{\alpha}\\
&=\sum_{s,r=1}^{m}\sum_{i,j=1}^{n}\langle\xi_{is},\varphi_{ij}(\langle xa_{is},xa_{jr}\rangle)\xi_{jr}\rangle_{\alpha}\\
&=\sum_{s,r=1}^{m}\sum_{i,j=1}^{n}\langle\xi_{is},(S_{i}^{\Phi})^{*}\pi^{\varphi}(a_{is}^{*}\langle x,x\rangle a_{jr})S_{j}^{\Phi}\xi_{jr}\rangle_{\alpha}\\
&=\sum_{s,r=1}^{m}\sum_{i,j=1}^{n}\langle\pi^{\varphi}(a_{is})S_{i}^{\Phi}(\xi_{is}),\pi^{\varphi}(\langle x,x\rangle)\pi^{\varphi}(a_{jr})S_{j}^{\Phi}\xi_{jr}\rangle_{\alpha}=\\
&=\Big\langle\sum_{s=1}^{m}\sum_{i=1}^{n}\pi^{\varphi}(a_{is})S_{i}^{\Phi}(\xi_{is}),\pi^{\varphi}(\langle x,x\rangle)\Big(\sum_{r=1}^{m}\sum_{j=1}^{n}\pi^{\varphi}(a_{jr})S_{j}^{\Phi}\xi_{jr}\Big)\Big\rangle_{\alpha}\\
&\leq\Big|\Big|\pi^{\varphi}(\langle x,x\rangle)\Big|\Big|_{\alpha}\,\Big|\Big|\big(\sum_{r=1}^{m}\sum_{i=1}^{n}\pi^{\varphi}(a_{i,r})S_{i}^{\Phi}\xi_{i,r})\Big|\Big|_{\alpha}^{2}\\
&\leq||x||_{\alpha}^{2}\Big|\Big|\big(\sum_{r=1}^{m}\sum_{i=1}^{n}\pi^{\varphi}(a_{i,r})S_{i}^{\Phi}\xi_{i,r})\Big|\Big|_{\alpha}^{2}
\end{align*}
for all $\alpha\in\Delta$ and therefore $\pi^{\Phi}(x)$ is well defined and continuous. Hence it can be extended to the whole of $\mathcal{K}^{\Phi}$. Now we prove that $\pi^{\Phi}$ is a representation. For showing this, let $x,y\in \mathcal{M}$; $a_{is},b_{jr}\in \mathscr{A}$; $\xi_{is},\eta_{jr}\in \mathcal{H}$; $1\leq i,j\leq n$; $1\leq s\leq l$, $1\leq r\leq m$; $n,m\in \Bbb{N}$. Then we have
\begin{align*}
&\hspace{-1cm}
\Big\langle(\pi^{\Phi}(x))^{*}\pi^{\Phi}(y)\Big(\sum_{r=1}^{m}\sum_{j=1}^{n}\pi^{\varphi}(b_{j,r})S_{j}^{\Phi}\eta_{j,r}\Big),
\sum_{s=1}^{l}\sum_{i=1}^{n}\pi^{\varphi}(a_{i,s})S_{i}^{\Phi}\xi_{i,s}\Big\rangle_{\alpha}\\
&=\Big\langle\sum_{r=1}^{m}\sum_{j=1}^{n}\Phi_{j}(yb_{jr})\eta_{jr},\sum_{s=1}^{l}\sum_{i=1}^{n}\Phi_{i}(xa_{is})\xi_{is}\Big\rangle_{\alpha}\\
&=\sum_{s=1}^{l}\sum_{r=1}^{m}\sum_{i,j=1}^{n}\langle\Phi_{i}(xa_{is})^{*}\Phi_{j}(yb_{jr})\eta_{jr},\xi_{is}\rangle_{\alpha}\\
&=\sum_{s=1}^{l}\sum_{r=1}^{m}\sum_{i,j=1}^{n}\langle\varphi_{ij}(\langle xa_{is},yb_{jr}\rangle)\eta_{jr},\xi_{is}\rangle_{\alpha}\\
&=\sum_{s=1}^{l}\sum_{r=1}^{m}\sum_{i,j=1}^{n}\langle (S_{i}^{\Phi})^{*}\pi(a_{is}^{*}\langle x,y\rangle a_{jr})S_{j}^{\Phi}\eta_{jr},\xi_{is}\rangle_{\alpha}\\
&=\Big\langle\pi(\langle x,y\rangle)\Big(\sum_{r=1}^{m}\sum_{j=1}^{n}\pi(b_{j,r})S_{j}^{\Phi}\eta_{j,r}\Big),\sum_{s=1}^{l}
\sum_{i=1}^{n}\pi(a_{i,s})S_{i}^{\Phi}\xi_{i,s}\Big\rangle_{\alpha}
\end{align*}
for all $\alpha\in\Delta$.
Thus $(\pi^{\Phi}(x))^{*}\pi^{\Phi}(y)=\pi^{\varphi}(\langle x,y\rangle)$ on a dense set and hence they are equal on $\mathcal{H}^{\Phi}$.
Let $W_{i},\,i\in \{1,\dots,n\}$ be the orthogonal projection from $\mathcal{K}$ to $\mathcal{K}_{i}^{\Phi}$. Then $W_{i}^{*}:K_{i}^{\Phi}\rightarrow \mathcal{K}$ is an inclusion map. Hence $W_{i}W_{i}^{*}=I_{\mathcal{K}_{i}^{\Phi}}$ for every $i\in\{1,\dots,n\}$. Now we give a representation for $[\Phi]$. For every $x\in \mathcal{M}$ and $\xi\in \mathcal{H}$, we have
$$
\Phi_{ij}(x)(\xi)=(W_{i}^{\Phi})^{*}\pi^{\Phi}(x)S_{j}^{\Phi}(\xi)\,\, \text{for every} \,\,i,j\in\{1,\ldots,n\}.
$$
\end{proof}

\begin{definition}\label{def:minimal}
Let $[\varphi]$ and $[\Phi]$ be as an Theorem~\ref{KGNS}. We say that a data $(\pi,H,K,S_{1},\dots,S_{n},W_{1},\dots,W_{n})$, is a {\it Stinespring representation} of $([\varphi],[\Phi])$ if the conditions $(1)-(2)$ of Theorem~\ref{KGNS} are satisfied. By $K_{i}$, $i\in\{1,\dots,n\}$ we denote $[\pi(\mathcal{M})S_{i}(\mathcal{H})]$.
Such a representation is said to be {\it minimal} if
\begin{enumerate}
 \item[1)] $H=\left[\bigcup_{i=1}^{n}\pi^{\varphi}(A)S_{i}(\mathcal{H})\right]$;
 \item[2)] $K=\left[\bigcup_{i=1}^{n}\pi(\mathcal{M})S_{i}(\mathcal{H})\right]$.
\end{enumerate}
\end{definition}

\begin{thm}\label{KGNS-1}
Let $[\varphi]$ and $[\Phi]$ be as an Theorem~\ref{KGNS}. Assume that
$$
(\pi,H,K,S_{1},\dots,S_{n},W_{1},\dots,W_{n})\,\, \text{and}\,\, (\pi',H',K',S'_{1},\dots,S'_{n},W'_{1},\dots,W'_{n})
$$
are minimal Stinespring representations, which associated with the continuous $*$-homomorphisms $\pi^{\varphi}:\mathscr{A}\to \mathscr{L}(H)$ ($(\pi^{\varphi})':\mathscr{A}\to \mathscr{L}(H')$), respectively. Then there exist unitary operators $U_{1}:H\rightarrow H'$, $U_{2}:K\rightarrow K'$ such that
\begin{enumerate}
 \item[(1)] $U_{1}S_{i}=S'_{i},\,\forall i\in\{1,\dots,n\}$; $U_{1}\pi^{\varphi}(a)=(\pi^{\varphi})'(a)U_{1}$, $\forall a\in \mathscr{A}$.
 \item[(2)] $U_{2}W_{i}=W'_{i};\,\forall i\in\{1,\dots,n\}$; $U_{2}\pi(x)=\pi'(x)U_{1}$; $\forall x\in \mathcal{M}$.
\end{enumerate}
That is, the following diagram commutes, for all $a\in \mathscr{A}$, $x\in \mathcal{M}$, $i\in\{1,\dots,n\}$
$$
\begin{CD}
\mathcal{H}@>S_{i}>> H@>\pi^{\varphi}(a)>> H@>\pi(x)>>K@<W_{i}<< \mathcal{K}\\
@VV\text{Id}V @VVU_{1}V @VVU_{1}V @VVU_{2}V @VV\text{Id}V \\
\mathcal{H}@>S'_{i}>> H'@>(\pi^{\varphi})'(a)>>H'@>\pi'(x)>>K'@<W'_{i}<< \mathcal{K}
\end{CD}
$$
\end{thm}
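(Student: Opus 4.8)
The strategy is the standard uniqueness argument for Stinespring-type dilations, adapted to the matrix setting: define the candidate unitaries on dense subsets generated by the minimality conditions, check that they are well-defined isometries using the defining identities $\varphi_{ij}(a)=(S_i)^*\pi^\varphi(a)S_j$ and $\Phi_{ij}(x)=(W_i)^*\pi(x)S_j$, and then extend by continuity. Because we work with locally Hilbert spaces, every identity should first be verified at each level $\alpha\in\Delta$ with respect to the seminorm $\|\cdot\|_\alpha$, and the resulting operators are inductive limits of Hilbert-space unitaries $U_{1,\alpha}$, $U_{2,\alpha}$; this is routine once the algebraic identities are in place, so I will suppress it in the sketch.

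\textbf{Construction of $U_1$.} Consider the dense subspace of $H$ spanned by elements of the form $\sum_{s}\pi^\varphi(a_s)S_{i_s}\xi_s$ with $a_s\in\mathscr A$, $\xi_s\in\mathcal H$, $i_s\in\{1,\dots,n\}$ (dense by minimality condition~1). Define
$$
U_1\Big(\sum_{s}\pi^\varphi(a_s)S_{i_s}\xi_s\Big):=\sum_{s}(\pi^\varphi)'(a_s)S'_{i_s}\xi_s.
$$
To see this is well-defined and isometric (for each seminorm $\|\cdot\|_\alpha$), compute the inner product of two such sums: using $(S_i)^*\pi^\varphi(a^*b)S_j=\varphi_{ij}(a^*b)$ and the corresponding primed identity, both
$\big\langle\sum_s\pi^\varphi(a_s)S_{i_s}\xi_s,\sum_t\pi^\varphi(b_t)S_{j_t}\eta_t\big\rangle_\alpha$
and its primed analogue equal $\sum_{s,t}\langle\xi_s,\varphi_{i_sj_t}(a_s^*b_t)\eta_t\rangle_\alpha$, which depends only on $[\varphi]$. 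Hence $U_1$ is a well-defined linear isometry onto a dense subspace of $H'$, so it extends to a unitary $U_1:H\to H'$. Taking a single term $\pi^\varphi(1)S_i\xi=S_i\xi$ gives $U_1S_i=S'_i$, and the intertwining relation $U_1\pi^\varphi(a)=(\pi^\varphi)'(a)U_1$ follows by applying both sides to a generator and using $\pi^\varphi(a)\pi^\varphi(a_s)=\pi^\varphi(aa_s)$.

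\textbf{Construction of $U_2$.} Similarly, the subspace spanned by $\sum_s\pi(x_s)S_{i_s}\xi_s$ is dense in $K$ (minimality condition~2). Define
$$
U_2\Big(\sum_s\pi(x_s)S_{i_s}\xi_s\Big):=\sum_s\pi'(x_s)S'_{i_s}\xi_s,
$$
and verify well-definedness/isometry by the same device: using $(\pi(x))^*\pi(y)=\pi^\varphi(\langle x,y\rangle)$ (proved inside Theorem~\ref{KGNS}) together with step one's computation, both inner products reduce to $\sum_{s,t}\langle\xi_s,\varphi_{i_sj_t}(\langle x_s,y_t\rangle)\eta_t\rangle_\alpha$ — with the cross term handled by first peeling off $\pi(x)^*\pi(y)$ and then invoking the level of $H$. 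Thus $U_2$ extends to a unitary $K\to K'$. The relation $U_2\pi(x)=\pi'(x)U_1$ is checked on generators: $U_2\pi(x)\pi^\varphi(a_s)S_{i_s}\xi_s=U_2\pi(xa_s)S_{i_s}\xi_s=\pi'(xa_s)S'_{i_s}\xi_s=\pi'(x)(\pi^\varphi)'(a_s)S'_{i_s}\xi_s=\pi'(x)U_1\pi^\varphi(a_s)S_{i_s}\xi_s$. Finally $U_2W_i=W'_i$: recall $W_i$ is the orthogonal projection of $\mathcal K$ onto $K_i=[\pi(\mathcal M)S_i(\mathcal H)]$, with $W_i^*$ the inclusion; taking adjoints of $\Phi_{ij}(x)=(W_i)^*\pi(x)S_j$ and the primed version, $W_i\Phi_{ij}(x)\xi=\pi(x)S_j\xi$ modulo the projection, and one checks $U_2$ carries $K_i$ onto $K'_i$ and intertwines the projections, using that $U_2$ already intertwines $\pi$ and $\pi'$ and sends $S_i$-generators to $S'_i$-generators.

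\textbf{Main obstacle.} The only genuinely delicate point is the bookkeeping forced by the locally Hilbert / locally $C^*$ setting: one must confirm that the isometries $U_{1,\alpha},U_{2,\alpha}$ constructed level-by-level are compatible with the connecting embeddings $j_{\alpha\beta}$ (so that their inductive limits exist in $\mathscr L(H,H')$ and $\mathscr L(K,K')$), and that surjectivity — not merely isometry — holds, i.e. that $U_1$ actually maps \emph{onto} a dense subspace of $H'$ and $U_2$ onto a dense subspace of $K'$; this is where minimality of \emph{both} representations is used symmetrically. The purely algebraic identities are immediate from parts (1)–(2) of Theorem~\ref{KGNS} and from $(\pi(x))^*\pi(y)=\pi^\varphi(\langle x,y\rangle)$, so I do not anticipate difficulty there.
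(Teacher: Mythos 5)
Your proposal follows essentially the same route as the paper's proof: define $U_1$ and $U_2$ on the dense subspaces given by minimality, verify isometry via the identities $\varphi_{ij}(a)=S_i^*\pi^\varphi(a)S_j$ and $\pi(x)^*\pi(y)=\pi^\varphi(\langle x,y\rangle)$, extend by continuity, and check the intertwining relations (including $U_2W_i=W_i'$ via $\Phi_{ij}(x)=W_i^*\pi(x)S_j=W_i'^*U_2\pi(x)S_j$) on generators. The argument is correct; the inductive-limit bookkeeping you flag as the main obstacle is in fact glossed over in the paper as well.
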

\begin{proof}
Let us prove the existence of the unitary map $U_{1}:H\rightarrow H'$. First define $U_{1}$ on the dense subspace of $H$ spanned by $\bigcup_{i=1}^{n}\pi^{\varphi}(A)S_{i}(\mathcal{H})$.
$$
U_{1}\Big(\sum_{s=1}^{m}\sum_{i=1}^{n}\pi^{\varphi}(a_{is})S_{i}(\xi_{is})\Big):=
\sum_{s=1}^{m}\sum_{i=1}^{n}(\pi^{\varphi})'(a_{is})S'_{i}(\xi_{is})\,,
$$
where $a_{is}\in \mathscr{A}$, $\xi_{is}\in \mathcal{H}$, $m\in\Bbb{N}$. It is not difficult to check that $U_{1}$ is a surjective continuous operator. Denote the extension of $U_{1}$ to $H$ by $U_{1}$ itself. Then $U_{1}$ is unitary and satisfies the condition in $(1)$. Now define $U_{2}$ on the dense subspace spanned by $\bigcup_{i=1}^{n}\pi(\mathcal{M})S_{i}(\mathcal{H})$.
\begin{align*}
&\hspace{-1cm} U_{2}\Big(\sum_{s=1}^{m}\pi(x_{1s})S_{1}\xi_{1s}+\dots+\sum_{s=1}^{m}\pi(x_{ns})S_{n}\xi_{ns}\Big)\\
&:=\sum_{s=1}^{m}\pi'(x_{1s})S'_{1}\xi_{1s}+\dots+\sum_{s=1}^{m}\pi'(x_{ns})S'_{n}\xi_{ns},
\end{align*}
where $x_{is}\in \mathcal{M}$, $\xi_{is}\in \mathcal{H}$, $m\in\Bbb{N}$. Using the fact that $S_{i},S_{i}'$ are continuous embeddings
for every $i\in\{1,\dots,n\}$ we have
$$
U_{2}\Big(\sum_{s=1}^{m}\pi(x_{is})S_{i}\xi_{is}\Big)=
\sum_{s=1}^{m}\pi'(x_{is})S'_{i}\xi_{is},
$$
and so $U_{2}(K_{i})=K'_{i}$. We can see that $U_{2}$ is well defined and can be extended to a unitary map. For showing this, consider

\begin{align*}
&\hspace{-2cm} \Big|\Big| \sum_{s=1}^{m}\pi'(x_{1s})S'_{1}\xi_{1s}+\dots+\sum_{s=1}^{m}\pi'(x_{ns})S'_{n}\xi_{ns} \Big|\Big|_{\alpha}^{2}\\
&=\Big\langle\sum_{i=1}^{n}\sum_{s=1}^{m}\pi'(x_{is})S'_{i}\xi_{is},
\sum_{j=1}^{n}\sum_{r=1}^{m}\pi'(x_{jr})S'_{j}\xi_{jr}\Big\rangle_{\alpha}\\
&=\sum_{s,r=1}^{m}\sum_{i,j=1}^{n}\langle\pi'(x_{is})S'_{i}\xi_{is},
\pi'(x_{jr})S'_{j}\xi_{jr}\rangle_{\alpha}\\
&=\sum_{s,r=1}^{m}\sum_{i,j=1}^{n}\langle\xi_{is},
S'^{*}_{i}(\pi^{\varphi})'(\langle x_{is},x_{jr}\rangle)S'_{j}(\xi_{jr})\rangle_{\alpha}\\
&=\sum_{s,r=1}^{m}\sum_{i,j=1}^{n}\langle\xi_{is},
\varphi_{ij}(\langle xa_{is},xa_{jr}\rangle)(\xi_{jr})\rangle_{\alpha}\\
&=\sum_{s,r=1}^{m}\sum_{i,j=1}^{n}\langle\xi_{is},
S^{*}_{i}\pi^{\varphi}(\langle x_{is},x_{jr}\rangle)S_{j}(\xi_{jr})\rangle_{\alpha}\\
&=\sum_{s,r=1}^{m}\sum_{i,j=1}^{n}\langle\pi(x_{is})S_{i}\xi_{is},
\pi(x_{jr})S_{j}\xi_{jr}\rangle_{\alpha}\\
&=\Big\langle\sum_{i=1}^{n}\sum_{s=1}^{m}\pi(x_{is})S_{i}\xi_{is},
\sum_{j=1}^{n}\sum_{r=1}^{m}\pi(x_{jr})S_{j}\xi_{jr}\Big\rangle_{\alpha}\\
&=\Big|\Big|
\sum_{s=1}^{m}\pi(x_{1s})S_{1}\xi_{1s}+\dots+\sum_{s=1}^{m}\pi(x_{ns})S_{n}\xi_{ns}\Big|\Big|_{\alpha}^{2}
\end{align*}
for all $\alpha\in\Delta$. Hence $U_{2}$ is well defined and continuous, therefore $U_{2}$ can be extended to whole of $K$. We denote this extension by $U_{2}$ itself. Evidently
$U_{2}$ is a surjective continuous operator. We notice that
$$
(\pi,H,K,S_{1},\dots,S_{n},W_{1},\dots,W_{n})\,\, \text{and}\,\,(\pi',H',K',S'_{1},\dots,S'_{n},W'_{1},\dots,W'_{n})
$$
are Stinespring representations for $([\varphi], [\Phi])$. Hence for every $i\in\{1,\dots,n\}$ we have
$$
\Phi_{ij}(x)=W_{i}^{*}\pi(x)S_{j}=W'^{*}_{i}\pi'(x)S'_{j}=W'^{*}_{i}U_{2}\pi(x)S_{j}.
$$
Hence
$$
(W_{i}^{*}-W'^{*}_{i}U_{2})\Psi(x)S_{j}=0,
$$
whence
$$
(W_{i}^{*}-W'^{*}_{i}U_{2})\pi(x)S_{j}(\xi)=0
$$
for all $x\in \mathcal{M},\,\xi\in \mathcal{H},\,\, i,j\in \{1,\dots,n\}$.\\
Hence $U_{2}W_{i}=W'_{i}$ for every $i\in\{1,\dots,n\}$. Finally, we show that
$U_{2}\pi(x)=\pi'(x)U_{1}$ on the dense subspace
$$
\Big\{\sum_{s=1}^{m}\sum_{i=1}^{n}\pi(a_{is})S_{i}(\xi_{is});\,a_{is}\in \mathscr{A},\,\xi_{is}\in \mathcal{H},\,m\in\Bbb{N}\Big\}.
$$
We must recall that every representation $\pi:\mathcal{M}\rightarrow L(H,K)$ has the property $\pi(xa)=\pi(x)\pi(a)$ for every $x\in \mathcal{M}$ and $a\in\mathscr{A}$. Utilizing the fact that $\pi$ and $\pi'$ are representations associated with $\pi^{\varphi}$ and $(\pi^{\varphi})'$, respectively, we have
\begin{align*}
U_{2}\pi(x)\Big(\sum_{s=1}^{m}\sum_{i=1}^{n}\pi^{\varphi}(a_{is})S_{i}(\xi_{is})\Big)&=U_{2}\Big(\sum_{s=1}^{m}\sum_{i=1}^{n}\pi(xa_{is})S_{i}\xi_{is}\Big)\\
&=\sum_{s=1}^{m}\sum_{i=1}^{n}\pi'(xa_{is})S'_{i}\xi_{is}\\
&=\pi'(x)\Big(\sum_{s=1}^{m}\sum_{i=1}^{n}(\pi^{\varphi})'(a_{is})S'_{i}(\xi_{is})\Big)\\
&=\pi'(x)U_{1}\Big(\sum_{s=1}^{m}\sum_{i=1}^{n}\pi^{\varphi}(a_{is})S_{i}(\xi_{is})\Big)\,.
\end{align*}
\end{proof}

\section{Radon--Nikodym type theorem}

Consider a full Hilbert $\mathscr{A}$-module $\mathcal{M}$ over a locally $C^*$-algebra $\mathscr{A}$ and locally Hilbert spaces $\mathcal{H},\mathcal{K}$.
Let $[\Phi],[\Psi]\in\mathcal{CP}_{n}(\mathcal{M},\mathscr{L}(\mathcal{H},\mathcal{K}))$. We say that $[\Phi]=(\Phi_{ij})_{i,j=1}^{n}$ is equivalent to   $[\Psi]=(\Psi_{ij})_{i,j=1}^{n}$, denoted by $[\Phi]\sim[\Psi]$, if
\begin{gather*}
\langle[\Phi](x),[\Phi](x)\rangle=\langle[\Psi](x),[\Psi](x)\rangle
\end{gather*}
for every $x\in \mathcal{M}$. We remark that the relation $\sim$ is an equivalence relation on $\mathcal{CP}_{n}(\mathcal{M},\mathscr{L}(\mathcal{H},\mathcal{K}))$.

\begin{lemma}\label{Eq}
Let $\mathscr{A}$ be a unital locally $C^*$-algebra and $\mathcal{M}$ be a full Hilbert $\mathscr{A}$-module over $\mathscr{A}$. Let $\mathcal{H},\mathcal{K}$ be locally Hilbert spaces and $[\Phi],[\Psi]\in\mathcal{CP}_{n}(\mathcal{M},\mathscr{L}(\mathcal{H},\mathcal{K}))$. Then $\Psi\sim\Phi$ if and only if the Stinespring constructions associated
with $\Psi$ and $\Phi$ are unitarily equivalent.
\end{lemma}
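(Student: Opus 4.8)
The plan is to prove the equivalence by first showing that the relation $[\Phi]\sim[\Psi]$ is nothing but the equality $[\varphi]=[\psi]$ of the associated completely $n$-positive maps, and then reading off the unitary equivalence of the Stinespring constructions directly from that equality.

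\emph{Reduction.} By Definition~\ref{CP} we have $\langle[\Phi](x),[\Phi](x)\rangle=(\varphi_{ij}(\langle x,x\rangle))_{i,j=1}^{n}$, so $[\Phi]\sim[\Psi]$ means exactly $\varphi_{ij}(\langle x,x\rangle)=\psi_{ij}(\langle x,x\rangle)$ for all $x\in\mathcal{M}$ and all $i,j$. Feeding the polarization identity $\langle x,y\rangle=\tfrac14\sum_{k=0}^{3}i^{k}\langle x+i^{k}y,\,x+i^{k}y\rangle$ into the linear maps $\varphi_{ij},\psi_{ij}$ upgrades this to $\varphi_{ij}(\langle x,y\rangle)=\psi_{ij}(\langle x,y\rangle)$ for all $x,y\in\mathcal{M}$. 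Because $\langle xa,y\rangle=a^{*}\langle x,y\rangle$ and $\langle x,ya\rangle=\langle x,y\rangle a$, the linear span of $\{\langle x,y\rangle:x,y\in\mathcal{M}\}$ is already a self-adjoint two-sided ideal of $\mathscr{A}$; its closure therefore equals the closed two-sided ideal it generates, which is $\mathscr{A}$ since $\mathcal{M}$ is full. As $\varphi_{ij},\psi_{ij}$ are continuous, we conclude $\varphi_{ij}=\psi_{ij}$ on $\mathscr{A}$, i.e.\ $[\varphi]=[\psi]$; the converse implication is immediate.

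\emph{From $[\varphi]=[\psi]$ to unitary equivalence.} The data $\mathcal{H}^{\Phi},\pi^{\varphi},S_{1}^{\Phi},\dots,S_{n}^{\Phi}$ manufactured in the proof of Theorem~\ref{KGNS} depend only on the matrix $(\varphi_{ij})$, and similarly $\mathcal{H}^{\Psi},\pi^{\psi},S_{1}^{\Psi},\dots,S_{n}^{\Psi}$ depend only on $(\psi_{ij})$. Since these matrices agree, the construction of $U_{1}$ in the proof of Theorem~\ref{KGNS-1} applies verbatim and yields a unitary $U_{1}:\mathcal{H}^{\Phi}\to\mathcal{H}^{\Psi}$ with $U_{1}S_{i}^{\Phi}=S_{i}^{\Psi}$ and $U_{1}\pi^{\varphi}(a)=\pi^{\psi}(a)U_{1}$ for all $a\in\mathscr{A}$ and $i$. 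On the subspace of $\mathcal{K}^{\Phi}$ spanned by the vectors $\pi^{\Phi}(x)S_{i}^{\Phi}\xi$ — dense by item~(4) of Theorem~\ref{KGNS} — define $U_{2}$ by
\[
U_{2}\Big(\sum_{i,s}\pi^{\Phi}(x_{is})S_{i}^{\Phi}\xi_{is}\Big):=\sum_{i,s}\pi^{\Psi}(x_{is})S_{i}^{\Psi}\xi_{is}.
\]
That $U_{2}$ is a well-defined isometry follows from the identity already isolated in the proof of Theorem~\ref{KGNS}, namely $\langle\pi^{\Phi}(x)S_{i}^{\Phi}\xi,\pi^{\Phi}(y)S_{j}^{\Phi}\eta\rangle_{\alpha}=\langle\xi,(S_{i}^{\Phi})^{*}\pi^{\varphi}(\langle x,y\rangle)S_{j}^{\Phi}\eta\rangle_{\alpha}=\langle\xi,\varphi_{ij}(\langle x,y\rangle)\eta\rangle_{\alpha}$, which shows that $\big\|\sum_{i,s}\pi^{\Phi}(x_{is})S_{i}^{\Phi}\xi_{is}\big\|_{\alpha}^{2}=\sum_{i,j,s,r}\langle\xi_{is},\varphi_{ij}(\langle x_{is},x_{jr}\rangle)\xi_{jr}\rangle_{\alpha}$ depends only on $(\varphi_{ij})$; the same computation for $\Psi$ produces the same number by the Reduction step. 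Hence $U_{2}$ respects every seminorm $\|\cdot\|_{\alpha}$, extends to a unitary $\mathcal{K}^{\Phi}\to\mathcal{K}^{\Psi}$ (its range is dense by minimality of the $\Psi$-construction), and, using $\pi^{\Phi}(xa)=\pi^{\Phi}(x)\pi^{\varphi}(a)$, satisfies $U_{2}\pi^{\Phi}(x)\big(\pi^{\varphi}(a)S_{i}^{\Phi}\xi\big)=\pi^{\Psi}(xa)S_{i}^{\Psi}\xi=\pi^{\Psi}(x)U_{1}\big(\pi^{\varphi}(a)S_{i}^{\Phi}\xi\big)$, so $U_{2}\pi^{\Phi}(x)=\pi^{\Psi}(x)U_{1}$ on all of $\mathcal{H}^{\Phi}$; thus the two Stinespring constructions are unitarily equivalent.

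\emph{The converse, and the main difficulty.} If, conversely, $(U_{1},U_{2})$ implement such a unitary equivalence, then conjugating $\varphi_{ij}(a)=(S_{i}^{\Phi})^{*}\pi^{\varphi}(a)S_{j}^{\Phi}$ (item~(1) of Theorem~\ref{KGNS}) by $U_{1}$ and using $U_{1}S_{j}^{\Phi}=S_{j}^{\Psi}$, $U_{1}\pi^{\varphi}(a)=\pi^{\psi}(a)U_{1}$ gives $\psi_{ij}(a)=(S_{i}^{\Psi})^{*}\pi^{\psi}(a)S_{j}^{\Psi}=\varphi_{ij}(a)$, and the Reduction step then yields $[\Phi]\sim[\Psi]$. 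The only genuine work is the verification in the previous paragraph: one must handle the tuple-valued inner products on $\mathcal{K}^{\Phi}$ and $\mathcal{K}^{\Psi}$ correctly and invoke the precise inner-product identity from the proof of Theorem~\ref{KGNS} so that the squared seminorm collapses to an expression involving only $(\varphi_{ij})$ — after which the equality $[\varphi]=[\psi]$ carries everything through. The identification of the $\mathscr{A}$-sides of the two constructions is, by comparison, purely formal, being again just the remark that $\mathcal{H}^{\Phi}$, $\pi^{\varphi}$ and the $S_{i}^{\Phi}$ are functions of $(\varphi_{ij})$ alone.
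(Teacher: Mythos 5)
Your proof is correct and follows essentially the same route as the paper's: the same unitary $U_{1}$ at the $\mathcal{H}$-level (the paper obtains it from Joi\c{t}a's uniqueness theorem, you from the observation that $\mathcal{H}^{\Phi},\pi^{\varphi},S_{i}^{\Phi}$ are functions of $[\varphi]$ alone), the same isometry computation defining $U_{2}$ on the dense span of $\pi^{\Phi}(\mathcal{M})S_{i}^{\Phi}(\mathcal{H})$, and the same intertwining and converse arguments. Your explicit Reduction step (polarization plus fullness, upgrading $\varphi_{ij}(\langle x,x\rangle)=\psi_{ij}(\langle x,x\rangle)$ to $[\varphi]=[\psi]$) makes precise something the paper silently assumes and is worth keeping; the only blemishes are that the polarization identity as you wrote it returns $\langle y,x\rangle$ rather than $\langle x,y\rangle$ for an inner product linear in the second variable (harmless, since any correct variant still places $\langle x,y\rangle$ in the linear span of diagonal values), and that, like the paper, you assert rather than verify the relation $U_{2}W_{i}^{\Phi}=W_{i}^{\Psi}$.
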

\begin{proof}
Let $\Psi\sim\Phi$. We must prove that
the Stinespring constructions of $[\Phi]$ and $[\Psi]$ are unitary equivalent.
For every $x\in\mathcal{M}$, we have
\begin{gather*}
(\Phi_{ij}(x)_{i,j=1}^{n})^{*}(\Phi_{ij}(x))_{i,j=1}^{n}=
(\varphi_{ij}\langle x,x\rangle)_{i,j=1}^{n}=(\Psi_{ij}(x)_{i,j=1}^{n})^{*}(\Psi_{ij}(y))_{i,j=1}^{n}
\end{gather*}
for some continuous completely $n$-positive map $(\varphi_{ij})$. By \cite[Theorem~4.1.8]{J-0}, there exists a unitary operator $U_{1}\in \mathscr{L}(\mathcal{H}^{\Phi},\mathcal{H}^{\Psi})$ such that $U_{1}S_{i}^{\Phi}=S_{i}^{\Psi}$ for every $i\in\{1,\dots,n\}$.
Observe that by the Theorem~\ref{KGNS} the elements
\begin{gather*}
\Big(\sum\limits_{i=1}^{n}\sum\limits_{s=1}^{m}\Phi_{1i}(x_{is})\xi_{is},\dots,\sum\limits_{i=1}^{n}\sum\limits_{s=1}^{m}\Phi_{ni}(x_{is})\xi_{is}\Big);\\
x_{is}\in\mathcal{M};\, \xi_{is}\in \mathcal{H};\, 1\leq i\leq n;\, 1\leq s\leq m;\, m\in\Bbb{N},
\end{gather*}
are dense in the locally Hilbert space $\mathcal{K}^{\Phi}$. Now for all $\alpha\in\Delta$ we have
\begin{eqnarray*}
&&\hspace{-3cm}\Big|\Big|\Big(\sum\limits_{i=1}^{n}\sum\limits_{s=1}^{m}
\Phi_{1i}(x_{is})\xi_{is},\dots,\sum\limits_{i=1}^{n}\sum\limits_{s=1}^{m}\Phi_{ni}(x_{is})\xi_{is}\Big)\Big|\Big|_{\alpha}^{2}\\
&=&\sum_{s,r=1}^{m}\sum_{i,j=1}^{n}\sum_{l=1}^{n}\langle\xi_{is},\Phi_{li}(xa_{is})^{*}\Phi_{lj}(xa_{jr})\xi_{jr}\rangle_{\alpha}\\
&=&\sum_{s,r=1}^{m}\sum_{i,j=1}^{n}\langle\xi_{is},\varphi_{ij}(\langle x_{is},x_{jr}\rangle)\xi_{jr}\rangle_{\alpha}\\
&=&\sum_{s,r=1}^{m}\sum_{i,j=1}^{n}\sum_{l=1}^{n}\langle\xi_{is},\Psi_{li}(xa_{is})^{*}\Psi_{lj}(xa_{jr})\xi_{jr}\rangle_{\alpha}\\
&=&\Big|\Big|\Big(\sum\limits_{i=1}^{n}\sum\limits_{s=1}^{m}
\Psi_{1i}(x_{is})\xi_{is},\dots,\sum\limits_{i=1}^{n}\sum\limits_{s=1}^{m}\Psi_{ni}(x_{is})\xi_{is}\Big)\Big|\Big|_{\alpha}^{2}
\end{eqnarray*}
Since the elements $\Big(\sum\limits_{i=1}^{n}\sum\limits_{s=1}^{m}\Psi_{1i}(x_{is})\xi_{is},\dots,\sum\limits_{i=1}^{n}\sum\limits_{s=1}^{m}\Psi_{ni}(x_{is})\xi_{is}\Big)$ are dense in the locally Hilbert space
$\mathcal{K}^{\Psi}$, there exists a continuous linear operator $U_{2}:\mathcal{K}^{\Phi}\to \mathcal{K}^{\Psi}$ defined by
\begin{gather*}
\hspace{-3cm}U_{2}\Big(\sum\limits_{i=1}^{n}\sum\limits_{s=1}^{m}\Phi_{1i}(x_{is})\xi_{is},\dots,\sum\limits_{i=1}^{n}\sum\limits_{s=1}^{m}\Phi_{ni}(x_{is})\xi_{is}\Big)\\
=\Big(\sum\limits_{i=1}^{n}\sum\limits_{s=1}^{m}\Psi_{1i}(x_{is})\xi_{is},\dots,\sum\limits_{i=1}^{n}\sum\limits_{s=1}^{m}\Psi_{ni}(x_{is})\xi_{is}\Big)
\end{gather*}
on a dense subspace of $\mathcal{K}^{\Phi}$. Hence it can be extended to the whole space $\mathcal{K}^{\Phi}$ and
$U_{2}W^{\Phi}=W^{\Psi}$.
Let us prove that for every $x\in\mathcal{M}$ the equality $U_{2}\pi^{\Phi}(x)=\pi^{\Psi}(x)U_{1}$ holds.
To this end, take $x\in\mathcal{M}$. We have
\begin{eqnarray*}
&&\hspace{-1in}U_{2}\pi^{\Phi}(x)\Big(\sum_{s=1}^{m}\pi^{\varphi}(a_{1s})S_{1}^{\Phi}\xi_{1s},\dots,\sum_{s=1}^{m}\pi^{\varphi}(a_{ns})S_{n}^{\Phi}\xi_{ns}\Big)\\
&=&U_{2}\Big(\sum_{i=1}^{n}\sum_{s=1}^{m}\Phi_{1i}(xa_{1s})\xi_{1s},\dots,\sum_{i=1}^{n}\sum_{s=1}^{m}\Phi_{ni}(xa_{ns})\xi_{ns}\Big)\\
&=&\Big(\sum_{i=1}^{n}\sum_{s=1}^{m}\Psi_{1i}(xa_{1s})\xi_{1s},\dots,\sum_{i=1}^{n}\sum_{s=1}^{m}\Psi_{ni}(xa_{ns})\xi_{ns}\Big)\\
&=&\pi^{\Psi}(x)\Big(\sum_{s=1}^{m}\pi^{\psi}(a_{1s})S_{1}^{\Psi}\xi_{1s},\dots,\sum_{s=1}^{m}\pi^{\psi}(a_{ns})S_{n}^{\Psi}\xi_{ns}\Big)\\
&=&\pi^{\Psi}(x)U_{1}\Big(\sum_{s=1}^{m}\pi^{\varphi}(a_{1s})S_{1}^{\Phi}\xi_{1s},\dots,\sum_{s=1}^{m}\pi^{\varphi}(a_{ns})S_{n}^{\Phi}\xi_{ns}\Big).
\end{eqnarray*}
Since linear continuous operators $U_{2}\pi^{\Phi}(x)$ and $\pi^{\Psi}(x)U_{1}$ coincide on a dense subspace of the space $\mathcal{H}^{\Phi}$, they coincide on the whole space. Hence the unitarily equivalence of the Stinespring constructions is established.

On the other hand, assume that the Stinespring constructions of $[\Phi]$ and $[\Psi]$ are unitarily equivalent.
Take $x\in\mathcal{M}$, $i,j\in\{1,\dots,n\}$. Then we may write
\begin{eqnarray*}
\psi_{ij}\langle x,x\rangle &=&\sum_{k=1}^{n}\Psi_{ki}(x)^{*}\Psi_{kj}(x)\\
&=&\sum_{k=1}^{n}\Big((W_{k}^{\Psi})^*\pi^{\Psi}(x)S_{i}^{\Psi}\Big)^*(W_{k}^{\Psi})^*\pi^{\Psi}(x)S_{j}^{\Psi}\\
&=&\sum_{k=1}^{n}\Big((U_{2}W_{k}^{\Phi})^*\pi^{\Psi}(x)U_{1}S_{i}^{\Phi}\Big)^*(U_{2}W_{k}^{\Phi})^*\pi^{\Psi}(x)U_{1}S_{j}^{\Phi}\\
&=&\sum_{k=1}^{n}\Big((W_{k}^{\Phi})^*U_{2}^*U_{2}\pi^{\Phi}(x)S_{i}^{\Phi}\Big)^*(W_{k}^{\Phi})^*U_{2}^*U_{2}\pi^{\Phi}(x)S_{j}^{\Phi}\\
&=&\sum_{k=1}^{n}\Big((W_{k}^{\Phi})^*\pi^{\Phi}(x)S_{i}^{\Phi}\Big)^*(W_{k}^{\Phi})^*\pi^{\Phi}(x)S_{j}^{\Phi}\\
&=&\sum_{k=1}^{n}\Psi_{ki}(x)^{*}\Psi_{kj}(x)=\varphi_{ij}\langle x,x\rangle.
\end{eqnarray*}
Thus $[\Psi]\sim[\Phi]$.
\end{proof}

Let $[\Phi],[\Psi]\in\mathcal{CP}_{n}(\mathcal{M},\mathscr{L}(\mathcal{H},\mathcal{K}))$. We say that $[\Phi]$ is dominated by $[\Psi]$ and denoted by $[\Phi]\preceq[\Psi]$,
if $\langle[\Psi](x),[\Psi](x)\rangle\leq\langle[\Phi](x),[\Phi](x)\rangle$ for every $x\in\mathcal{M}$.
The following properties of the relation $\preceq$ are evident:
\begin{itemize}
\item~$\Phi\preceq\Phi$ for every $\Phi\in\mathcal{CP}_{n}(\mathcal{M},\mathscr{L}(\mathcal{H},\mathcal{K}))$;
\item~ If $\Phi_{1}\preceq\Phi_{2}$, $\Phi_{2}\preceq\Phi_{3}$ then $\Phi_{2}\preceq\Phi_{3}$ for every $\Phi_{1},\Phi_{2},\Phi_{3}\in\mathcal{CP}_{n}(\mathcal{M},\mathscr{L}(\mathcal{H},\mathcal{K}))$;
\item~If $\Phi\preceq\Psi$ and $\Psi\preceq\Phi$ then $\Psi\thicksim\Phi$ for every $\Phi,\Psi\in\mathcal{CP}_{n}(\mathcal{M},\mathscr{L}(\mathcal{H},\mathcal{K}))$.
\end{itemize}

Let $\mathcal{H}_{1},\mathcal{H}_{2}$ be locally Hilbert spaces with the same some index set $\Delta$. By $\mathcal{H}_{1}\oplus\mathcal{H}_{2}$ we denote
the direct sum topological vector spaces $\mathcal{H}_{1}$ and $\mathcal{H}_{2}$.

Let $\Pi:\mathcal{M}\to \mathscr{L}(\mathcal{H}_{1},\mathcal{H}_{2})$ be a representation of $\mathcal{M}$ on locally Hilbert spaces $\mathcal{H}_{1}$ and $\mathcal{H}_{2}$. The set
$$
\Pi(\mathcal{M})':=\{T\oplus N\in \mathscr{L}(\mathcal{H}_{1}\oplus \mathcal{H}_{2}):\,\Pi(x)T=N\Pi(x);\,\Pi(x)^*N=T\Pi(x)^*;\, x\in \mathcal{M}\}
$$
is called the {\it commutant} of $\Pi(\mathcal{M})$.

\begin{lemma}\label{com}
Let $[\Phi]\in\mathcal{CP}_{n}(\mathcal{M},\mathscr{L}(\mathcal{H},\mathcal{K}))$ and $(\pi^{\Phi},\mathcal{H}^{\Phi},\mathcal{K}^{\Phi},S_{1}^{\Phi},\dots,S_{n}^{\Phi},W_{1}^{\Phi},\dots,W_{n}^{\Phi})$ be the Stinespring construction
associated with $[\Phi]$. If the operator $T\oplus N\in\pi^{\Phi}(\mathcal{M})'$ is positive, then
there exists $[\Phi^{T\oplus N}]\in\mathcal{CP}_{n}(\mathcal{M},\mathscr{L}(\mathcal{H},\mathcal{K}))$, defined by the formula
$$
\Phi_{ij}^{T\oplus N}(x)=(W_{i}^{\Phi})^*\sqrt{N}\pi^{\Phi}(x)\sqrt{T}S_{j}^{\Phi}
$$
\end{lemma}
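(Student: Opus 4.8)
The plan is to verify that the matrix $[\Phi^{T\oplus N}] = (\Phi_{ij}^{T\oplus N})_{i,j=1}^n$ is indeed a completely positive $n\times n$ matrix in the sense of Definition~\ref{CP}, i.e. to exhibit a continuous completely $n$-positive map $[\varphi^{T\oplus N}]:\mathscr{A}\to\mathscr{L}(\mathcal{H})$ such that $\langle[\Phi^{T\oplus N}](x),[\Phi^{T\oplus N}](y)\rangle = (\varphi_{ij}^{T\oplus N}\langle x,y\rangle)_{i,j=1}^n$ for all $x,y\in\mathcal{M}$. The natural candidate is
$$
\varphi_{ij}^{T\oplus N}(a) := (S_i^\Phi)^*\sqrt{T}\,\pi^\varphi(a)\sqrt{T}\,S_j^\Phi,\qquad a\in\mathscr{A}.
$$
First I would record that since $T\oplus N\in\pi^\Phi(\mathcal{M})'$ is positive, both $T$ and $N$ are positive operators in $\mathscr{L}(\mathcal{H}^\Phi)$ and $\mathscr{L}(\mathcal{K}^\Phi)$ respectively (each is the restriction of the positive operator $T\oplus N$ to a summand), so $\sqrt{T}$ and $\sqrt{N}$ exist in the respective locally $C^*$-algebras and are again continuous. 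I would also note the intertwining identity $\pi^\Phi(x)\sqrt{T} = \sqrt{N}\,\pi^\Phi(x)$ for all $x\in\mathcal{M}$, which follows from $\pi^\Phi(x)T = N\pi^\Phi(x)$ by the standard functional-calculus/approximation argument (polynomials in $T$, then uniform limits), valid seminorm-by-seminorm on each $\mathscr{A}_\alpha$.

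Next I would carry out the main computation. For $x,y\in\mathcal{M}$ and $i,j\in\{1,\dots,n\}$,
$$
\sum_{k=1}^n \Phi_{ki}^{T\oplus N}(x)^*\Phi_{kj}^{T\oplus N}(y) = \sum_{k=1}^n (S_i^\Phi)^*\sqrt{T}\,\pi^\Phi(x)^*\sqrt{N}\,W_k^\Phi (W_k^\Phi)^*\sqrt{N}\,\pi^\Phi(y)\sqrt{T}\,S_j^\Phi.
$$
Using that the $W_k^\Phi$ are the orthogonal projections onto the components $\mathcal{K}_k^\Phi$ of $\mathcal{K}^\Phi$ (so that, as in the proof of Theorem~\ref{KGNS}, $\sum_k W_k^\Phi(W_k^\Phi)^*$ acts as the identity on the relevant vectors — more precisely, on $\pi^\Phi(\mathcal{M})\mathcal{H}^\Phi$, which is where $\sqrt{N}\,\pi^\Phi(y)\sqrt{T}S_j^\Phi$ takes its values since $\sqrt{N}$ commutes appropriately), this collapses to
$$
(S_i^\Phi)^*\sqrt{T}\,\pi^\Phi(x)^*N\,\pi^\Phi(y)\sqrt{T}\,S_j^\Phi = (S_i^\Phi)^*\sqrt{T}\,\pi^\Phi(x)^*\pi^\Phi(y)\,\sqrt{T}\,S_j^\Phi,
$$
where in the last step I commute $\sqrt{N}$ back through using the intertwining relation $\pi^\Phi(x)^*N = T\pi^\Phi(x)^*$ together with $\pi^\Phi(y)\sqrt{T}=\sqrt{N}\pi^\Phi(y)$. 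By the representation property $\pi^\Phi(x)^*\pi^\Phi(y) = \pi^\varphi(\langle x,y\rangle)$ established in Theorem~\ref{KGNS}, this equals $(S_i^\Phi)^*\sqrt{T}\,\pi^\varphi(\langle x,y\rangle)\sqrt{T}\,S_j^\Phi = \varphi_{ij}^{T\oplus N}(\langle x,y\rangle)$, as desired.

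Finally I would check that $[\varphi^{T\oplus N}]$ is genuinely a continuous completely $n$-positive map from $\mathscr{A}$ to $\mathscr{L}(\mathcal{H})$: continuity of each $\varphi_{ij}^{T\oplus N}$ is clear from continuity of $S_i^\Phi$, $\sqrt{T}$ and $\pi^\varphi$, and complete $n$-positivity follows because for $(a_{ij})\in\mathbb{M}_n(\mathscr{A})$ positive one has $[\varphi^{T\oplus N}]_n((a_{ij})) = \tilde S^*\,(\pi^\varphi)_n((a_{ij}))\,\tilde S$ where $\tilde S$ is the operator with blocks $\sqrt{T}S_j^\Phi$ (this is exactly the structure that makes $[\varphi]$ in Theorem~\ref{KGNS} completely $n$-positive, with $\sqrt{T}S_j^\Phi$ in place of $S_j^\Phi$); positivity is preserved because $(\pi^\varphi)_n$ is a $*$-homomorphism hence positive. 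The main obstacle is the bookkeeping in the middle step, namely justifying that the projections $W_k^\Phi$ summed over $k$ may be suppressed and that $\sqrt{N}$ can be moved through $\pi^\Phi$ in the locally convex (seminorm-by-seminorm) setting; once one is careful that $\sqrt{N}\,\pi^\Phi(\mathcal{M})\mathcal{H}^\Phi\subseteq\bigoplus_k\mathcal{K}_k^\Phi = \mathcal{K}^\Phi$ and that the intertwining relations survive the square-root, the computation is routine.
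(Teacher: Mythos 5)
Your overall strategy is the paper's: expand $\sum_{k}\Phi_{ki}^{T\oplus N}(x)^*\Phi_{kj}^{T\oplus N}(y)$, collapse $\sum_k W_k^{\Phi}(W_k^{\Phi})^*$, pass the square roots through $\pi^{\Phi}$ via the commutant relations, and read off an associated completely $n$-positive matrix; your explicit functional-calculus justification of $\pi^{\Phi}(x)\sqrt{T}=\sqrt{N}\pi^{\Phi}(x)$ and your direct Stinespring-form verification of complete $n$-positivity (where the paper simply cites Lemma~4.2.2 of \cite{J-0}) are fine and, if anything, more careful than the original. But there is a concrete algebra error in the central step: from $(S_i^{\Phi})^*\sqrt{T}\,\pi^{\Phi}(x)^*N\,\pi^{\Phi}(y)\sqrt{T}\,S_j^{\Phi}$ you cannot pass to $(S_i^{\Phi})^*\sqrt{T}\,\pi^{\Phi}(x)^*\pi^{\Phi}(y)\sqrt{T}\,S_j^{\Phi}$. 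The commutant relations do not erase the $N$; they convert it into a $T$. Indeed $\pi^{\Phi}(x)^*N=T\pi^{\Phi}(x)^*$ gives $\sqrt{T}\,\pi^{\Phi}(x)^*N\,\pi^{\Phi}(y)\sqrt{T}=T^{3/2}\,\pi^{\Phi}(x)^*\pi^{\Phi}(y)\,\sqrt{T}=T^{2}\pi^{\varphi}(\langle x,y\rangle)$, using that $T$ (hence $\sqrt{T}$) commutes with $\pi^{\varphi}(\mathscr{A})$ --- not $T\pi^{\varphi}(\langle x,y\rangle)$. The identity $\pi^{\Phi}(x)^*N\pi^{\Phi}(y)=\pi^{\Phi}(x)^*\pi^{\Phi}(y)$ you invoke would force $N$ to act as the identity on $[\pi^{\Phi}(\mathcal{M})\mathcal{H}^{\Phi}]$, which is not assumed. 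Consequently the associated matrix is $\varphi_{ij}^{T\oplus N}(a)=(S_i^{\Phi})^*T\pi^{\varphi}(a)TS_j^{\Phi}=(S_i^{\Phi})^*T^{2}\pi^{\varphi}(a)S_j^{\Phi}$, which is exactly what the paper obtains ($\varphi_{ijT^2}$); your candidate $(S_i^{\Phi})^*\sqrt{T}\pi^{\varphi}(a)\sqrt{T}S_j^{\Phi}$ is off by a factor of $T$.

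The slip is harmless for the statement of the lemma: $T^{2}$ is still a positive element commuting with $\pi^{\varphi}(\mathscr{A})$, so your closing argument --- the Stinespring form $\tilde S^*(\pi^{\varphi})_n(\cdot)\tilde S$ with blocks $TS_j^{\Phi}$ in place of $\sqrt{T}S_j^{\Phi}$ --- goes through verbatim and yields that $[\Phi^{T\oplus N}]$ is completely positive. But as written, the identification of $[\varphi^{T\oplus N}]$ and the displayed equality used to derive it are incorrect and need the correction above.
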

\begin{proof}
For every $x,y\in \mathcal{M}$ we may write
\begin{align*}
\sum_{k=1}^{n}(\Phi_{ki}^{T\oplus N})(x)^*(\Phi_{kj}^{T\oplus N})(y)&=\sum_{k=1}^{n}(S_{i}^{\Phi})^*\sqrt{T}\pi^{\Phi}(x)^*\sqrt{N}W_{k}^{\Phi}
(W_{k}^{\Phi})^*\sqrt{N}\pi^{\Phi}(y)\sqrt{T}S_{j}^{\Phi}\\
&=(S_{i}^{\Phi})^*T\pi^{\Phi}(x)^*\Big(\sum_{k=1}^{n}W_{k}^{\Phi}(W_{k}^{\Phi})^*\Big)\pi^{\Phi}(y)TS_{j}^{\Phi}\\
&=(S_{i}^{\Phi})^*T\pi^{\Phi}(x)^*\pi^{\Phi}(y)TS_{j}^{\Phi}\\
&=(S_{i}^{\Phi})^*T^{2}\pi^{\varphi}(\langle x,y\rangle)S_{j}^{\Phi}\\
&=\varphi_{ijT^{2}}(\langle x,y\rangle).
\end{align*}
Employing \cite[Lemma~4.2.2]{J-0} we deduce that $[\varphi_{T^2}]=(\varphi_{ijT^2})_{ij}^{n}$ is continuous completely $n$-positive map from
$\mathscr{A}$ to $\mathscr{L}(\mathcal{H})$ and therefore $[\Phi^{T\oplus N}]\in\mathcal{CP}_{n}(\mathcal{M},\mathscr{L}(\mathcal{H},\mathcal{K}))$.
\end{proof}

\begin{lemma}\label{commut}
Let $\Pi:\mathcal{M}\to \mathscr{L}(\mathcal{H},\mathcal{K})$ be a representation of a full Hilbert $\mathscr{A}$-module $\mathcal{M}$ on locally Hilbert spaces $\mathcal{H}$ and $\mathcal{K}$.
Then $\Pi(\mathcal{M})'$ is a locally $C^*$-algebra and $T\in\pi(\mathscr{A})'$. Moreover, if the representation $\Pi$ is nondegenerate, then  for given $T\oplus N\in\Pi(\mathcal{M})'$ the operator $N$ is uniquely determined by $T$.
\end{lemma}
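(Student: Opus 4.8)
The plan is to treat the three assertions separately, each by a short argument built from the two commutation identities $\Pi(x)T=N\Pi(x)$ and $\Pi(x)^{*}N=T\Pi(x)^{*}$ defining $\Pi(\mathcal{M})'$, with the only real care going into the limit topologies. Throughout, $\pi$ denotes the $*$-representation of $\mathscr{A}$ on $\mathcal{H}$ associated with $\Pi$ (which exists by definition of a representation and is unique because $\mathcal{M}$ is full), so that $\pi(\langle x,y\rangle)=\langle\Pi(x),\Pi(y)\rangle=\Pi(x)^{*}\Pi(y)$.

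For the first assertion I would first note that $\mathcal{H}\oplus\mathcal{K}$ is again a locally Hilbert space, so $\mathscr{L}(\mathcal{H}\oplus\mathcal{K})$ is a locally $C^{*}$-algebra, inside which the block-diagonal operators $T\oplus N$ form a closed $*$-subalgebra (topologically $*$-isomorphic to $\mathscr{L}(\mathcal{H})\oplus\mathscr{L}(\mathcal{K})$). It then suffices to check that $\Pi(\mathcal{M})'$ is a closed $*$-subalgebra of the latter. Closure under sums and scalars is immediate; for products one composes the two identities factor by factor; for the involution one takes adjoints of the two identities and reads off that $T^{*}\oplus N^{*}\in\Pi(\mathcal{M})'$. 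Closedness follows since, for each fixed $x\in\mathcal{M}$, left and right multiplication by $\Pi(x)$ and by $\Pi(x)^{*}$ are continuous on $\mathscr{L}(\mathcal{H}\oplus\mathcal{K})$ (the defining seminorms are submultiplicative), so the relations pass to limits. A closed $*$-subalgebra of a locally $C^{*}$-algebra is itself a locally $C^{*}$-algebra, which gives the claim.

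For the second assertion, let $T\oplus N\in\Pi(\mathcal{M})'$. For $x,y\in\mathcal{M}$ I would compute
$$
T\pi(\langle x,y\rangle)=T\Pi(x)^{*}\Pi(y)=\Pi(x)^{*}N\Pi(y)=\Pi(x)^{*}\Pi(y)T=\pi(\langle x,y\rangle)T,
$$
so $T$ commutes with $\pi(b)$ for every $b$ in the linear span of $\{\langle x,y\rangle:x,y\in\mathcal{M}\}$. Since $\mathcal{M}$ is full, this span is dense in $\mathscr{A}$, and since $\pi$ is continuous while left and right multiplication by the fixed operator $T$ are continuous, $T\pi(a)=\pi(a)T$ for all $a\in\mathscr{A}$; that is, $T\in\pi(\mathscr{A})'$. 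For the third assertion, if $T\oplus N_{1}$ and $T\oplus N_{2}$ both lie in $\Pi(\mathcal{M})'$, subtracting the relation $\Pi(x)T=N_{i}\Pi(x)$ gives $(N_{1}-N_{2})\Pi(x)=0$ for every $x$, so the continuous operator $N_{1}-N_{2}$ vanishes on $[\Pi(\mathcal{M})(\mathcal{H})]$, which equals $\mathcal{K}$ by nondegeneracy; hence $N_{1}=N_{2}$.

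I expect the main (still modest) obstacle to be the topological bookkeeping in the first assertion — verifying that the block-diagonal operators form a closed $*$-subalgebra and that $\Pi(\mathcal{M})'$ is closed — together with the appeal to the standard facts that direct sums and closed $*$-subalgebras of locally $C^{*}$-algebras are again locally $C^{*}$-algebras, and that for a full module the linear span of the inner products is dense in $\mathscr{A}$; all of these are available from \cite{F,J-00,J-0}.
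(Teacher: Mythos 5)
Your proposal is correct and follows essentially the same route as the paper: closure of $\Pi(\mathcal{M})'$ under the $*$-algebra operations inside $\mathscr{L}(\mathcal{H}\oplus\mathcal{K})$ plus closedness, the identity $T\pi(\langle x,y\rangle)=\Pi(x)^*N\Pi(y)=\pi(\langle x,y\rangle)T$ combined with fullness for the second claim, and nondegeneracy of $[\Pi(\mathcal{M})(\mathcal{H})]$ in $\mathcal{K}$ for the uniqueness of $N$. Your write-up is in fact slightly more careful than the paper's on the topological points (closedness of the block-diagonal subalgebra and the passage to limits), but the underlying argument is the same.
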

\begin{proof}
Let $T_{1}\oplus N_{1},T_{2}\oplus N_{2}\in\Pi(\mathcal{M})'$ and $\zeta\in\Bbb{C}$. We have
\begin{gather*}
T_{1}\oplus N_{1}+T_{2}\oplus N_{2}=(T_{1}+T_{2})\oplus (N_{1}+N_{2})\in\Pi(\mathcal{M})';\\
\zeta(T_{1}\oplus N_{1})=\zeta T_{1}\oplus \zeta N_{1}\in\Pi(\mathcal{M})';\,
T_{1}^{*}\oplus N_{1}^{*}\in\Pi(\mathcal{M})';\\
(T_{1}\circ T_{2})\oplus (N_{1}\circ N_{2})\in\Pi(\mathcal{M})'.
\end{gather*}
Clearly $\Pi(\mathcal{M})'$ is a closed subalgebra of the locally $C^{*}$-algebra $\mathscr{L}(\mathcal{H}\oplus\mathcal{K})$ and
therefore $\Pi(\mathcal{M})'$ is a locally $C^{*}$-algebra. Take $T\oplus N\in\Pi(\mathcal{M})'$. Then we have
\begin{gather*}
\pi(\langle x,y\rangle)T=\langle \Pi(x)\Pi(y)\rangle T=\Pi(x)^{*}\Pi(y)T=\\
\Pi(x)^{*}N\Pi(y)=T \Pi(x)^{*}\Pi(y)=T\langle \Pi(x)\Pi(y)\rangle=T\pi(\langle x,y\rangle)
\end{gather*}
Taking into account that the linear span of the set $\{\langle x,y\rangle:\, x,y\in \mathcal{M}\}$ is dense in $\mathscr{A}$ we deduce
that $T\in\pi(\mathscr{A})'$.
Assume the representation $\Pi:\mathcal{M}\to \mathscr{L}(\mathcal{H},\mathcal{K})$ is nondegenerate and
$T\oplus N\in\Pi(\mathcal{M})'$.
Since the subspace $[\Pi(\mathcal{M})(\mathcal{H})]$ is dense in $\mathcal{K}$ for given $T$ the equality
$\Pi(x)T(\xi)=N(\Pi(x)\xi)$, $\xi\in\mathcal{H}$, $x\in \mathcal{M}$ determines the operator $N$.
\end{proof}

The following noncommutative version of the Radon--Nikodym theorem is the main result of this section.

\begin{thm}\label{RN}
Let $[\Phi],[\Psi]\in\mathcal{CP}_{n}(\mathcal{M},\mathscr{L}(\mathcal{H},\mathcal{K}))$ and $[\Psi]\preceq[\Phi]$. Then there exists a unique positive linear operator $\Delta_{\Psi}^{\Phi}\in(\pi^{\Phi}(\mathcal{M})')$ such that $[\Psi]\thicksim [\Phi^{\sqrt{\Delta_{\Phi}^{\Psi}}}]$.
\end{thm}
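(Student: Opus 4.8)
The plan is to mimic the classical operator Radon--Nikodym argument, transported to the locally Hilbert setting via the matrix KSGNS construction of Theorem~\ref{KGNS}. First I would fix the Stinespring construction $(\pi^{\Phi},\mathcal{H}^{\Phi},\mathcal{K}^{\Phi},S_1^{\Phi},\dots,S_n^{\Phi},W_1^{\Phi},\dots,W_n^{\Phi})$ associated with $[\Phi]$, with the associated $*$-homomorphism $\pi^{\varphi}$ where $[\varphi]$ is the completely $n$-positive map implementing $[\Phi]$, and similarly let $[\psi]$ implement $[\Psi]$. The domination hypothesis $[\Psi]\preceq[\Phi]$ unwinds, via Definition~\ref{CP}, to the coordinatewise inequality $\sum_{k=1}^n\Psi_{ki}(x)^*\Psi_{kj}(x)\le\sum_{k=1}^n\Phi_{ki}(x)^*\Phi_{kj}(x)$, i.e. $(\psi_{ij}\langle x,x\rangle)_{i,j}\le(\varphi_{ij}\langle x,x\rangle)_{i,j}$ as $n\times n$ matrices over $\mathscr{L}(\mathcal{H})$, for every $x\in\mathcal{M}$; since $\mathcal{M}$ is full this propagates to all of $\mathscr{A}$.

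The key step is to build the operator $\Delta=\Delta_{\Psi}^{\Phi}$ on $\mathcal{H}^{\Phi}$. I would define a sesquilinear form on the dense subspace $\bigcup_i \pi^{\varphi}(\mathscr{A})S_i^{\Phi}(\mathcal{H})$ of $\mathcal{H}^{\Phi}$ by
\[
B\Bigl(\sum_{i,s}\pi^{\varphi}(a_{is})S_i^{\Phi}\xi_{is},\ \sum_{j,r}\pi^{\varphi}(b_{jr})S_j^{\Phi}\eta_{jr}\Bigr)
=\sum_{i,j,s,r}\bigl\langle\xi_{is},\ \psi_{ij}(a_{is}^*b_{jr})\eta_{jr}\bigr\rangle_{\alpha},
\]
checked for each seminorm $\alpha\in\Delta$; the domination inequality shows this form is well defined (independent of representatives, since a vanishing element on the $\Phi$-side forces vanishing on the $\psi$-side), positive, and bounded by the corresponding $\Phi$-form in every $\|\cdot\|_\alpha$. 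Working seminorm-by-seminorm on the Hilbert spaces $\mathcal{H}_\alpha^{\Phi}$ and passing to the inductive limit, one gets a positive operator $\Delta\in\mathscr{L}(\mathcal{H}^{\Phi})$ with $0\le\Delta\le I$ and $\psi_{ij}(a)=(S_i^{\Phi})^*\Delta\,\pi^{\varphi}(a)\,S_j^{\Phi}$; the identity $B(\pi^{\varphi}(c)\zeta,\eta)=B(\zeta,\pi^{\varphi}(c^*)\eta)$ shows $\Delta\in\pi^{\varphi}(\mathscr{A})'$. Then, because $\mathcal{M}$ is full and $\pi^{\Phi}$ is the (nondegenerate) representation attached to $\pi^{\varphi}$, Lemma~\ref{commut} supplies a unique $N$ with $\Delta\oplus N\in\pi^{\Phi}(\mathcal{M})'$, and one checks $N\ge0$ from positivity of $\Delta$ together with $\Pi(x)\Delta=N\Pi(x)$; I will write $\Delta_{\Psi}^{\Phi}$ for this element $\Delta\oplus N$ of $\pi^{\Phi}(\mathcal{M})'$ (abusing notation to call it positive via its first coordinate, consistently with Lemma~\ref{com}).

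Finally I would apply Lemma~\ref{com} with $T\oplus N=\Delta_{\Psi}^{\Phi}$ to produce $[\Phi^{\sqrt{\Delta_{\Psi}^{\Phi}}}]\in\mathcal{CP}_n(\mathcal{M},\mathscr{L}(\mathcal{H},\mathcal{K}))$ whose entries are $\Phi_{ij}^{\sqrt{\Delta}\oplus\sqrt N}(x)=(W_i^{\Phi})^*\sqrt N\,\pi^{\Phi}(x)\,\sqrt{\Delta}\,S_j^{\Phi}$; the computation in the proof of Lemma~\ref{com} (with $T=\sqrt\Delta$, using $\sum_k W_k^{\Phi}(W_k^{\Phi})^*=I$ and $\pi^{\Phi}(x)^*\pi^{\Phi}(x)=\pi^{\varphi}(\langle x,x\rangle)$, and $\sqrt\Delta\in\pi^{\varphi}(\mathscr{A})'$ since $\Delta$ is) gives $\sum_k\Phi_{ki}^{\sqrt\Delta\oplus\sqrt N}(x)^*\Phi_{kj}^{\sqrt\Delta\oplus\sqrt N}(x)=(S_i^{\Phi})^*\Delta\,\pi^{\varphi}(\langle x,x\rangle)S_j^{\Phi}=\psi_{ij}\langle x,x\rangle$, which is precisely $\langle[\Psi](x),[\Psi](x)\rangle=\langle[\Phi^{\sqrt{\Delta_{\Psi}^{\Phi}}}](x),[\Phi^{\sqrt{\Delta_{\Psi}^{\Phi}}}](x)\rangle$, i.e. $[\Psi]\sim[\Phi^{\sqrt{\Delta_{\Psi}^{\Phi}}}]$. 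For uniqueness: if $\Delta'\oplus N'\in\pi^{\Phi}(\mathcal{M})'$ also works, comparing the forms $B$ on the dense set forces $(S_i^{\Phi})^*(\Delta-\Delta')\pi^{\varphi}(a)S_j^{\Phi}=0$ for all $i,j,a$; since $\mathcal{H}^{\Phi}=[\bigcup_i\pi^{\varphi}(\mathscr{A})S_i^{\Phi}(\mathcal{H})]$ and $\Delta-\Delta'\in\pi^{\varphi}(\mathscr{A})'$, this yields $\Delta=\Delta'$, and then $N=N'$ by the uniqueness clause of Lemma~\ref{commut}. The main obstacle I anticipate is the well-definedness and boundedness of the form $B$: one must verify, uniformly in $\alpha$, that the domination inequality at the level of $\langle x,x\rangle$ (equivalently of matrices $(\psi_{ij}(a_{is}^*b_{jr}))\le(\varphi_{ij}(a_{is}^*b_{jr}))$ for suitable positive elements) genuinely controls $B$ by the $\Phi$-inner product — this is exactly where fullness of $\mathcal{M}$ and the matrix-positivity structure (rather than mere positivity of the diagonal $\varphi_{ii}$) are essential — after which everything else is a routine transcription of the $C^*$-algebraic argument to the inductive-limit framework.
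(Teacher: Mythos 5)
Your overall strategy is sound and lands in the same place as the paper, but the route to the Radon--Nikodym derivative is genuinely different. The paper does not represent a sesquilinear form on $\mathcal{H}^{\Phi}$: it invokes Joi\c{t}a's comparison result (\cite[Lemma~4.2.5]{J-0}) to obtain a contraction $R:\mathcal{H}^{\Phi}\to\mathcal{H}^{\Psi}$ between the \emph{two} dilation spaces with $\pi^{\psi}=\pi^{\varphi}_{R^*R}$, explicitly constructs a second contraction $Q:\mathcal{K}^{\Phi}\to\mathcal{K}^{\Psi}$ from the norm inequality on the generating vectors of $\mathcal{K}^{\Phi}$, verifies the intertwining relations $Q\pi^{\Phi}(x)=\pi^{\Psi}(x)R$ and $\pi^{\Psi}(x)^*Q=R\pi^{\Phi}(x)^*$, and sets $\Delta_{\Psi}^{\Phi}=R^*R\oplus Q^*Q$. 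Your construction of the first component as the operator representing the dominated form $B$ is the classical Arveson-style argument and is equivalent (your $\Delta$ is exactly $R^*R$); it buys independence from the $\Psi$-dilation of $\mathcal{H}$, at the cost of having to manufacture the second component yourself rather than reading it off as $Q^*Q$.

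That is where the one genuine gap sits: you write that Lemma~\ref{commut} ``supplies a unique $N$ with $\Delta\oplus N\in\pi^{\Phi}(\mathcal{M})'$,'' but that lemma only asserts \emph{uniqueness} of $N$ given that $T\oplus N$ already lies in the commutant; it does not assert that every $T\in\pi^{\varphi}(\mathscr{A})'$ admits such an $N$. You must construct $N$ explicitly, e.g.\ by setting $N\bigl(\sum_k\pi^{\Phi}(x_k)\zeta_k\bigr)=\sum_k\pi^{\Phi}(x_k)\Delta\zeta_k$ on the dense subspace $[\pi^{\Phi}(\mathcal{M})\mathcal{H}^{\Phi}]$ of $\mathcal{K}^{\Phi}$ and checking well-definedness, contractivity and positivity via $\bigl\langle\sum_k\pi^{\Phi}(x_k)\Delta\zeta_k,\sum_l\pi^{\Phi}(x_l)\zeta_l\bigr\rangle=\sum_{k,l}\langle\Delta^{1/2}\pi^{\varphi}(\langle x_l,x_k\rangle)^{\,}\zeta_k,\dots\rangle$, using $\Delta\in\pi^{\varphi}(\mathscr{A})'$ and $\pi^{\Phi}(x)^*\pi^{\Phi}(y)=\pi^{\varphi}(\langle x,y\rangle)$ --- essentially reproducing by hand what the paper gets for free from $Q$. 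Two smaller points to tighten: the passage from the hypothesis $(\psi_{ij}\langle x,x\rangle)\le(\varphi_{ij}\langle x,x\rangle)$ to the inequality on general elements $a_{is}^*b_{jr}$ of $\mathscr{A}$ deserves a line (the paper is equally terse here, asserting $[\psi]\preceq[\varphi]$ without proof), and your uniqueness argument should make explicit that it is the nondegeneracy of $\pi^{\Phi}$, via Lemma~\ref{commut}, that upgrades $\Delta=\Delta'$ to $N=N'$ --- which you do state correctly.
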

\begin{proof}
Take the Stinespring constructions $(\pi^{\Phi},\mathcal{H}^{\Phi},\mathcal{K}^{\Phi},S_{1}^{\Phi},\dots,S_{n}^{\Phi},W_{1}^{\Phi},\dots,W_{n}^{\Phi})$ and also $(\pi^{\Psi},\mathcal{H}^{\Psi},\mathcal{K}^{\Psi},S_{1}^{\Psi},\dots,S_{n}^{\Psi},W_{1}^{\Psi},\dots,W_{n}^{\Psi})$ associated with $[\Phi]$ and $[\Psi]$, respectively. If $[\Psi]\preceq[\Phi]$, then $[\psi]\preceq[\phi]$ and by utilizing \cite[Lemma~4.2.5]{J-0} there exists a linear continuous operator $R:\mathcal{H}^{\Phi}\to \mathcal{H}^{\Psi}$ such that
\begin{align*}
&\hspace{-2cm}R\Big(\sum_{s=1}^{m}\pi^{\varphi}(a_{1s})S_{1}^{\Phi}\xi_{1s},\dots,\sum_{s=1}^{m}\pi^{\varphi}(a_{ns})S_{n}^{\Phi}\xi_{ns}\Big)\\
&=\Big(\sum_{s=1}^{m}\pi^{\psi}(a_{1s})S_{1}^{\Psi}\xi_{1s},\dots,\sum_{s=1}^{m}\pi^{\psi}(a_{ns})S_{n}^{\Psi}\xi_{ns}\Big)\,,
\end{align*}
where $x\in \mathcal{M}$, $a_{is}\in \mathscr{A}$, $\xi_{is}\in \mathcal{H}$, $1\leq i\leq n$, $1\leq s\leq m$, $m\in\Bbb{N}$.
Moreover $\|R_{\alpha}\|\leq 1$ for all $\alpha\in\Delta$ and $\pi^{\psi}(a)=\pi^{\varphi}_{R^*R}(a)$ for every $a\in \mathscr{A}$. We have
\begin{align*}
&\hspace{-2cm}\Big|\Big|\Big(\sum\limits_{i=1}^{n}\sum\limits_{s=1}^{m}\Psi_{1i}(x_{is})\xi_{is},
\dots,\sum\limits_{i=1}^{n}\sum\limits_{s=1}^{m}\Psi_{ni}(x_{is})\xi_{is}\Big)\Big|\Big|_{\alpha}^{2}\\
&=\sum_{s,r=1}^{m}\sum_{i,j=1}^{n}\sum_{l=1}^{n}\langle\xi_{is},\Psi_{li}(xa_{is})^{*}\Psi_{lj}(xa_{jr})\xi_{jr}\rangle_{\alpha}\\
&=\sum_{s,r=1}^{m}\sum_{i,j=1}^{n}\langle\xi_{is},\psi_{ij}(\langle x_{is},x_{jr}\rangle)\xi_{jr}\rangle_{\alpha}\\
&\leq\sum_{s,r=1}^{m}\sum_{i,j=1}^{n}\langle\xi_{is},\phi_{ij}(\langle x_{is},x_{jr}\rangle)\xi_{jr}\rangle_{\alpha}\\
&=\sum_{s,r=1}^{m}\sum_{i,j=1}^{n}\sum_{l=1}^{n}\langle\xi_{is},\Phi_{li}(xa_{is})^{*}\Phi_{lj}(xa_{jr})\xi_{jr}\rangle_{\alpha}\\
&=\Big|\Big|\Big(\sum\limits_{i=1}^{n}\sum\limits_{s=1}^{m}\Phi_{1i}(x_{is})\xi_{is},
\dots,\sum\limits_{i=1}^{n}\sum\limits_{s=1}^{m}\Phi_{ni}(x_{is})\xi_{is}\Big)\Big|\Big|_{\alpha}^{2}.
\end{align*}
Since the elements $\Big(\sum\limits_{i=1}^{n}\sum\limits_{s=1}^{m}\Phi_{1i}(x_{is})\xi_{is},
\dots,\sum\limits_{i=1}^{n}\sum\limits_{s=1}^{m}\Phi_{ni}(x_{is})\xi_{is}\Big)$
are dense in the space $\mathcal{K}^{\Phi}$, we deduce that there exists a unitary operator $Q:\mathcal{K}^{\Phi}\to \mathcal{K}^{\Psi}$ such that
\begin{gather*}
\hspace{-2cm}Q\Big(\sum\limits_{i=1}^{n}\sum\limits_{s=1}^{m}\Phi_{1i}(x_{is})\xi_{is},
\dots,\sum\limits_{i=1}^{n}\sum\limits_{s=1}^{m}\Phi_{ni}(x_{is})\xi_{is}\Big)\\
=\Big(\sum\limits_{i=1}^{n}\sum\limits_{s=1}^{m}\Psi_{1i}(x_{is})\xi_{is},
\dots,\sum\limits_{i=1}^{n}\sum\limits_{s=1}^{m}\Psi_{ni}(x_{is})\xi_{is}\Big)
\end{gather*}
It is clear that $\|Q_{\alpha}
\|\leq 1$ for all $\alpha\in\Delta$.
Now take an arbitrary element $x\in \mathcal{M}$. Then
\begin{align*}
&\hspace{-2cm}Q\pi^{\Phi}(x)\Big(\sum_{s=1}^{m}\pi^{\varphi}(a_{1s})S_{1}^{\Phi}\xi_{1s},\dots,\sum_{s=1}^{m}\pi^{\varphi}(a_{ns})S_{n}^{\Phi}\xi_{ns}\Big)\\
&=Q\Big(\sum\limits_{i=1}^{n}\sum\limits_{s=1}^{m}\Phi_{1i}(x_{is})\xi_{is},
\dots,\sum\limits_{i=1}^{n}\sum\limits_{s=1}^{m}\Phi_{ni}(x_{is})\xi_{is}\Big)\\
&=\Big(\sum\limits_{i=1}^{n}\sum\limits_{s=1}^{m}\Psi_{1i}(x_{is})\xi_{is},
\dots,\sum\limits_{i=1}^{n}\sum\limits_{s=1}^{m}\Psi_{ni}(x_{is})\xi_{is}\Big)\\
&=\pi^{\Psi}(x)\Big(\sum_{s=1}^{m}\pi^{\psi}(a_{1s})S_{1}^{\Psi}\xi_{1s},\dots,\sum_{s=1}^{m}\pi^{\psi}(a_{ns})S_{n}^{\Psi}\xi_{ns}\Big)\\
&=\pi^{\Psi}(x)R\Big(\sum_{s=1}^{m}\pi^{\varphi}(a_{1s})S_{1}^{\Phi}\xi_{1s},\dots,\sum_{s=1}^{m}\pi^{\varphi}(a_{ns})S_{n}^{\Phi}\xi_{ns}\Big)\,,
\end{align*}
where $a_{is}\in \mathscr{A}$, $\xi_{is}\in \mathcal{H}$, $1\leq i\leq n$, $1\leq s\leq m$, $m\in\Bbb{N}$. Since the elements
$$
\Big(\sum\limits_{s=1}^{m}\pi^{\varphi}(a_{1s})S_{1}^{\Phi}\xi_{1s},\dots,\sum\limits_{s=1}^{m}\pi^{\varphi}(a_{ns})S_{n}^{\Phi}\xi_{ns}\Big)
$$
are dense in the locally Hilbert space
$\mathcal{H}^{\Phi}$ we have $Q\pi^{\Phi}(x)=\pi^{\Psi}(x)R$.

Take again $x,y\in \mathcal{M}$. Then
\begin{align*}
&\hspace{-1.5cm}\pi^{\Psi}(x)^*Q\Big(\sum\limits_{i=1}^{n}\sum\limits_{s=1}^{m}\Phi_{1i}(ya_{is})\xi_{is},
\dots,\sum\limits_{i=1}^{n}\sum\limits_{s=1}^{m}\Phi_{ni}(ya_{is})\xi_{is}\Big)\\
&=\pi^{\Psi}(x)^*\Big(\sum\limits_{i=1}^{n}\sum\limits_{s=1}^{m}\Psi_{1i}(ya_{is})\xi_{is},
\dots,\sum\limits_{i=1}^{n}\sum\limits_{s=1}^{m}\Psi_{ni}(ya_{is})\xi_{is}\Big)\\
&=\pi^{\Psi}(x)^*\pi^{\Psi}(y)\Big(\sum_{s=1}^{m}\pi^{\psi}(a_{1s})S_{1}^{\Psi}\xi_{1s},\dots,\sum_{s=1}^{m}\pi^{\psi}(a_{ns})S_{n}^{\Psi}\xi_{ns}\Big)\\
&=\pi^{\psi}(\langle x,y\rangle)\Big(\sum_{s=1}^{m}\pi^{\psi}(a_{1s})S_{1}^{\Psi}\xi_{1s},\dots,\sum_{s=1}^{m}\pi^{\psi}(a_{ns})S_{n}^{\Psi}\xi_{ns}\Big)\\
&=R\pi^{\varphi}(\langle x,y\rangle)\Big(\sum_{s=1}^{m}\pi^{\varphi}(a_{1s})S_{1}^{\Phi}\xi_{1s},\dots,\sum_{s=1}^{m}\pi^{\varphi}(a_{ns})S_{n}^{\Phi}\xi_{ns}\Big)\\
&=R\pi^{\Phi}(x)^*\pi^{\Phi}(y)\Big(\sum_{s=1}^{m}\pi^{\varphi}(a_{1s})S_{1}^{\Phi}\xi_{1s},\dots,\sum_{s=1}^{m}\pi^{\varphi}(a_{ns})S_{n}^{\Phi}\xi_{ns}\Big)\\
&=R\pi^{\Phi}(x)^*\Big(\sum\limits_{i=1}^{n}\sum\limits_{s=1}^{m}\Phi_{1i}(ya_{is})\xi_{is},
\dots,\sum\limits_{i=1}^{n}\sum\limits_{s=1}^{m}\Phi_{ni}(ya_{is})\xi_{is}\Big)\,,
\end{align*}
where $a_{is}\in \mathscr{A}$, $\xi_{is}\in \mathcal{H}$, $1\leq i\leq n$, $1\leq s\leq m$, $m\in\Bbb{N}$. Taking into account that the sums
$$
\Big(\sum\limits_{i=1}^{n}\sum\limits_{s=1}^{m}\Phi_{1i}(ya_{is})\xi_{is},
\dots,\sum\limits_{i=1}^{n}\sum\limits_{s=1}^{m}\Phi_{ni}(ya_{is})\xi_{is}\Big)
$$
are dense in the locally Hilbert space $\mathcal{K}^{\Phi}$ we deduce that $\pi^{\Psi}(x)^*Q=R\pi^{\Phi}(x)^*$.

We define the operator $\Delta_{\Psi}^{\Phi}$ by $\Delta_{\Psi}^{\Phi}=\Delta_{1}\oplus\Delta_{2}$, where $\Delta_{1}=R^*R$ and $\Delta_{2}=Q^*Q$. For every $x\in \mathcal{M}$ the following equalities hold:
\begin{gather*}
\Delta_{2}\pi^{\Phi}(x)=Q^*Q\pi^{\Phi}(x)=Q^*\pi^{\Psi}(x)R
=\pi^{\Phi}(x)R^*R=\pi^{\Phi}(x)\Delta_{1}.
\end{gather*}
The same equalities are true for the adjoint operator as
\begin{gather*}
\pi^{\Phi}(x)^*\Delta_{2}=\pi^{\Phi}(x)^*Q^*Q=R^*\pi^{\Psi}(x)^*Q
=R^*R\pi^{\Phi}(x)^*=\Delta_{1}\pi^{\Phi}(x)^*.
\end{gather*}
Hence $\Delta_{\Psi}^{\Phi}\in(\pi^{\Phi}(\mathcal{M})')$ and $0\leq\Delta_{\Psi}^{\Phi}\leq 1$. The same calculations as in Lemma~\ref{com} for every $x\in \mathcal{M}$ give us equalities
\begin{align*}
\Big(\langle[\Phi^{\sqrt{\Delta_{\Phi}^{\Psi}}}(x)],[\Phi^{\sqrt{\Delta_{\Phi}^{\Psi}}}(y)]\rangle\Big)_{ij}&=
\sum_{r=1}^{n}(\Phi^{\sqrt{\Delta_{\Phi}^{\Psi}}}_{ri}(x))^*\Phi_{rj}^{\sqrt{\Delta_{\Phi}^{\Psi}}}(y)\\
&=(S_{i}^{\Phi})^*\Delta_{1}\pi^{\varphi}(\langle x,y\rangle)S_{j}^{\Phi}\\
&=\varphi_{ij\Delta_{1}}(\langle x,y\rangle).
\end{align*}
Hence, we deduce
\begin{align*}
\langle[\Phi^{\sqrt{\Delta_{\Phi}^{\Psi}}}](x),[\Phi^{\sqrt{\Delta_{\Phi}^{\Psi}}}](x)\rangle=
[\varphi]_{\Delta_{1}}(\langle x,x\rangle)
=[\psi](\langle x,x\rangle)=\langle[\Psi](x),[\Psi]\rangle(x),
\end{align*}
and $[\Psi]\sim[\Phi^{\sqrt{\Delta_{\Psi}^{\Phi}}}]$. Assume that there exists another positive operator $T\oplus S\in(\pi^{\Phi}(\mathcal{M})')$ such that $[\Psi]\sim[\Phi^{\sqrt{T\oplus S}}]$. Then
$[\Phi^{\sqrt{\Delta_{\Psi}^{\Phi}}}]\sim[\Phi^{\sqrt{T\oplus S}}]$ and $[\varphi_{\Delta_{1}}]=[\varphi_{T}]$. Since the representation $\pi_{\Phi}$ is nondegenerate we deduce $\Delta_{1}=T$ and $\Delta_{\Psi}^{\Phi}=T\oplus S$.
\end{proof}
The operator $\Delta_{\Phi}^{\Psi}\in(\pi^{\Phi}(\mathcal{M})')$ is called the {\it Radon--Nikodym derivative } of $[\Psi]$ with respect to $[\Phi]$. We notice that if $\Delta_{\Psi}^{\Phi}=\Delta_{1}\oplus\Delta_{2}$ is the Radon--Nikodym derivative of $[\Psi]$ with respect to $[\Phi]$, then $\Delta_{1}$ is the Radon--Nikodym derivative of $[\psi]$ with respect to $[\varphi]$.

For $[\Phi]\in\mathcal{CP}_{n}(\mathcal{M},\mathscr{L}(\mathcal{H},\mathcal{K}))$ let
$$
\widetilde{[\Phi]}:=\{[\Psi]\in\mathcal{CP}_{n}(\mathcal{M},\mathscr{L}(\mathcal{H},\mathcal{K})):\,[\Psi]\sim[\Phi]\}
$$
Take $[\Psi],[\Phi]\in\mathcal{CP}_{n}(\mathcal{M},\mathscr{L}(\mathcal{H},\mathcal{K}))$. We use the notation $\widetilde{[\Psi]}\leq\widetilde{[\Phi]}$ when
$[\Psi]\preceq[\Phi]$. For $[\Phi]\in\mathcal{CP}_{n}(\mathcal{M},\mathscr{L}(\mathcal{H},\mathcal{K}))$ consider the set
$$
[0,\widehat{[\Phi]}]:=\{\widetilde{[\Psi]}:\,[\Psi]\in\mathcal{CP}_{n}(\mathcal{M},\mathscr{L}(\mathcal{H},\mathcal{K})):\,\widetilde{[\Psi]}\leq\widetilde{[\Phi]}\}
$$
and
$$
[0,I]_{\Phi}:=\{T\oplus N\in\pi^{\Phi}(\mathcal{M})':\,0\leq T\oplus N\leq I \}.
$$

By Theorem~\ref{RN} we deduce the existence of a linear map $\Upsilon:[0,\widehat{[\Phi]}]\to[0,I]_{\Phi}$ such that
\begin{gather}\label{m-1}
[0,\widetilde{[\Phi]}]\in\widetilde{[\Psi]}\mapsto\Upsilon(\widetilde{[\Psi]})=\Delta_{\Phi}([\Psi])\in[0,I]_{\Phi}.
\end{gather}

\begin{thm}\label{Iso}
Let $[\Phi]\in\mathcal{CP}_{n}(\mathcal{M},\mathscr{L}(\mathcal{H},\mathcal{K}))$. Then the map $\Upsilon:[0,\widetilde{[\Phi]}]\to[0,I]_{\Phi}$
is order-preserving isomorphism.
\end{thm}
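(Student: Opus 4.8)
The plan is to show that $\Upsilon$ is a bijection that preserves and reflects order, exploiting the Radon--Nikodym correspondence established in Theorem~\ref{RN} together with the uniqueness clause in that theorem and the properties of the commutant $\pi^{\Phi}(\mathcal{M})'$ from Lemma~\ref{commut}. Since $\Upsilon(\widetilde{[\Psi]})=\Delta_{\Phi}^{\Psi}$ is well defined on equivalence classes (because equivalent completely positive matrices have the same Stinespring construction by Lemma~\ref{Eq}, hence the same Radon--Nikodym derivative), the first task is injectivity: if $\Upsilon(\widetilde{[\Psi_1]})=\Upsilon(\widetilde{[\Psi_2]})$, then $[\Psi_1]\sim[\Phi^{\sqrt{\Delta}}]\sim[\Psi_2]$ for the common value $\Delta$, so $[\Psi_1]\sim[\Psi_2]$ and the classes coincide.

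For surjectivity, I would start from an arbitrary $T\oplus N\in[0,I]_{\Phi}$ and apply Lemma~\ref{com} to produce $[\Phi^{T\oplus N}]\in\mathcal{CP}_{n}(\mathcal{M},\mathscr{L}(\mathcal{H},\mathcal{K}))$ with $\Phi^{T\oplus N}_{ij}(x)=(W_{i}^{\Phi})^*\sqrt{N}\pi^{\Phi}(x)\sqrt{T}S_{j}^{\Phi}$; one then checks $[\Phi^{T\oplus N}]\preceq[\Phi]$ using $0\le T\le I$ and $0\le N\le I$ together with $\sum_k W_{k}^{\Phi}(W_{k}^{\Phi})^*$ being a projection, exactly as in the computation in Lemma~\ref{com}. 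So $\widetilde{[\Phi^{T\oplus N}]}\in[0,\widehat{[\Phi]}]$, and by the uniqueness part of Theorem~\ref{RN} its Radon--Nikodym derivative must be $T\oplus N$ itself (here one uses that $\varphi_{ij T^2}$ determines $T$ via nondegeneracy of $\pi^{\Phi}$, and then Lemma~\ref{commut} recovers $N$ from $T$). Hence $\Upsilon(\widetilde{[\Phi^{T\oplus N}]})=T\oplus N$.

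It remains to verify that $\Upsilon$ and $\Upsilon^{-1}$ are order-preserving: if $\widetilde{[\Psi_1]}\le\widetilde{[\Psi_2]}$, i.e.\ $[\Psi_1]\preceq[\Psi_2]\preceq[\Phi]$, I would compare the associated scalar-level Radon--Nikodym derivatives $\Delta_1^{(1)},\Delta_1^{(2)}$ of $[\psi_1],[\psi_2]$ with respect to $[\varphi]$; since $\langle[\Psi_1](x),[\Psi_1](x)\rangle\le\langle[\Psi_2](x),[\Psi_2](x)\rangle$ translates into $[\varphi]_{\Delta_1^{(1)}}(\langle x,x\rangle)\le[\varphi]_{\Delta_1^{(2)}}(\langle x,x\rangle)$ for all $x$, and these elements have the form $(S_i^{\Phi})^*\Delta_1^{(k)}\pi^{\varphi}(\langle x,y\rangle)S_j^{\Phi}$, one deduces $\Delta_1^{(1)}\le\Delta_1^{(2)}$ as operators on $\mathcal{H}^{\Phi}$ by testing against the dense set of vectors $\sum_{i,s}\pi^{\varphi}(a_{is})S_i^{\Phi}\xi_{is}$; the corresponding $N$-components inherit the inequality because $N$ is uniquely determined by $T$ in a way that is visibly monotone (both $N^{(k)}$ act as $\pi^{\Phi}(x)\Delta_1^{(k)}\pi^{\Phi}(x)^*$-type expressions on the dense range of $\pi^{\Phi}(\mathcal{M})$). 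The converse direction, that $T_1\oplus N_1\le T_2\oplus N_2$ forces $\widetilde{[\Phi^{\sqrt{T_1\oplus N_1}}]}\le\widetilde{[\Phi^{\sqrt{T_2\oplus N_2}}]}$, runs through the same identity $\langle[\Phi^{\sqrt{T\oplus N}}](x),[\Phi^{\sqrt{T\oplus N}}](x)\rangle=[\varphi]_{T}(\langle x,x\rangle)$ read in reverse.

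The main obstacle I expect is the order-preservation of $\Upsilon^{-1}$ on the full direct-sum operators $T\oplus N$, rather than just on the $T$-components: monotonicity of $T\mapsto\varphi_{ij T^2}$ at the level of $\mathscr{A}\to\mathscr{L}(\mathcal{H})$ is essentially the $C^*$-case and follows from \cite[Lemma~4.2.2]{J-0}, but one must be careful that the equivalence class $\widetilde{[\Phi^{\sqrt{T\oplus N}}]}$ depends only on $T$ (since the defining sesquilinear form only sees $T^2$), so that the second summand $N$ plays no role in the order on $[0,\widehat{[\Phi]}]$ yet is pinned down uniquely inside $[0,I]_{\Phi}$ by nondegeneracy; reconciling these two facts—that $\Upsilon$ is injective as a map of classes even though the target remembers more data—is the delicate point, and it hinges precisely on the uniqueness assertion of Theorem~\ref{RN} and on Lemma~\ref{commut}.
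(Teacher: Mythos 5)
Your proposal is correct and follows essentially the same route as the paper: well-definedness and injectivity from the Radon--Nikodym correspondence of Theorem~\ref{RN}, surjectivity via Lemma~\ref{com}, and order-preservation in both directions by reducing to the $T$-component (the scalar-level derivative of $[\psi]$ with respect to $[\varphi]$) and then recovering the $N$-component from $T$ through nondegeneracy and Lemma~\ref{commut}. The only divergences are cosmetic: you argue injectivity directly from transitivity of $\sim$ where the paper cites \cite[Theorem~4.2.6]{J-0}, and you make explicit two points the paper glosses over (the verification that $[\Phi^{T\oplus N}]\preceq[\Phi]$ and the monotonicity, not just uniqueness, of the assignment $T\mapsto N$).
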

\begin{proof}
By Theorem~\ref{RN}, the map $\Upsilon:[0,\widetilde{[\Phi]}]\to[0,I]_{\Phi}$
is well defined. Let $[\Psi_{1}],[\Psi_{2}]\in\mathcal{C}_{n}(\mathcal{M},\mathscr{L}(\mathcal{H},\mathcal{K}))$ and
$[\Psi_{1}]\preceq[\Psi], [\Psi_{2}]\preceq[\Psi]$, $\Delta_{\Phi}(\Psi_{1})=\Delta_{\Phi}(\Psi_{2})$. Then, by (\cite[Theorem~4.2.6]{J-0},
$[\Psi_{1}]\sim[\Psi_{2}]$. Hence $\widetilde{[\Psi_{1}]}=\widetilde{[\Psi_{2}]}$ and we prove that $\Upsilon$ is an injective linear map.
On the other hand, take an arbitrary operator $T\oplus N\in\pi^{\Phi}(\mathcal{M})'$ such that $0\leq T\oplus N\leq I$. Utilizing Lemma~\ref{com}
we have $[\Phi_{T\oplus N}]\preceq[\Phi]$ and $\widetilde{[\Phi_{T\oplus N}]}\leq\widetilde{[\Phi]}$. Hence, the map $\Upsilon$ is surjective. If
$[\Psi_{1}]\preceq[\Psi_{2}]\preceq[\Phi]$, then $[\psi_{1}]\preceq[\psi_{2}]\preceq[\varphi]$ and again by using \cite[Theorem~4.2.6]{J-0} we deduce that
$\Delta_{1\Phi}(\Psi_{1})\leq\Delta_{1\Phi}(\Psi_{2})$. Since the representation $\pi^{\Phi}(\mathcal{M})$ is nondegenerate and by employing Lemma~\ref{commut} we get $\Delta_{\Phi}(\Psi_{1})\leq\Delta_{\Phi}(\Psi_{2})$. If
$$
0\leq T_{1}\oplus N_{1}\leq T_{2}\oplus N_{2};\, T_{1}\oplus N_{1}, T_{2}\oplus N_{2}\in\pi^{\Phi}(\mathcal{M})',
$$
then $0\leq T_{1}\leq T_{2}\leq I$, $T_{1}, T_{2}\in \pi^{\varphi}(\mathscr{A})'$. Hence $[\varphi_{T_{1}}]\leq[\varphi_{T_{2}}]$ and therefore
$[\Phi_{T_{1}\oplus N_{1}}]\preceq[\Phi_{T_{2}\oplus N_{2}}]$.
\end{proof}
\begin{rem}
In \cite{J-01}, Joi\c{t}a proved a Radon--Nikodym type theorem for completely positive $n\times n$ matrices of maps from locally $C^{\star}$-algebra $\mathscr{A}$ to
the $C^{\star}$-algebra $\mathscr{L}(\mathcal{H})$ of all bounded linear bounded operators on a Hilbert space $\mathcal{H}$. If we regard a completely positive $n\times n$ matrix as a completely positive map $[\varphi]:M_{n}(\mathscr{A})\to M_{n}(\mathscr{L}(\mathcal{H}))$, then Theorem\ref{RN} can be considered as a Radon--Nikodym theorem for a module $[\varphi]$-map in the sense of recent works of $\varphi$-maps (see for instance \cite{As,B,J-2,SSum}).
\end{rem}

\textbf{Acknowledgement.} M. S. Moslehian was supported by a grant from Ferdowsi University of Mashhad (No. 2/43524). M. Pliev was financially supported by the Ministry of Education and Science of the Russian Federation (the Agreement number 02.A03.21.0008).

\medskip

\end{document}